\numberwithin{equation}{section}
\newtheorem{theorem}{Theorem}[section]
\newtheorem{proposition}[theorem]{Proposition}
\newtheorem{question}[theorem]{Question}
\newtheorem{lemma}[theorem]{Lemma}
\theoremstyle{definition}
\newtheorem{definition}[theorem]{Definition}
\newtheorem{construction}[theorem]{Construction}
\newtheorem{remark}[theorem]{Remark}
\DeclareMathOperator\lk{\mathrm{lk}}
\DeclareMathOperator\st{\mathrm{st}}
\DeclareMathOperator{\intr}{\mathrm{int}}
\newcommand{\field}{{\bf k}}
\newcommand{\R}{{\mathbb R}}
\newcommand{\Z}{{\mathbb Z}}
\newcommand{\Sp}{\mathbb{S}}
\title{Ear Decomposition and Balanced Neighborly Simplicial Manifolds}
\author{Hailun Zheng\\
	\small Department of Mathematics \\[-0.8ex]
	\small University of Michigan\\[-0.8ex]
	\small Ann Arbor, MI, 48109, USA\\[-0.8ex]
	\small \texttt{hailunz@umich.edu}
}
\begin{document}
	\maketitle
	\begin{abstract}
		We find the first non-octahedral balanced 2-neighborly 3-sphere and the balanced 2-neighborly triangulation of the lens space $L(3,1)$. Each construction has 16 vertices. We show that there exists a balanced 3-neighborly non-spherical 5-manifold with 18 vertices. We also show that the rank-selected subcomplexes of a balanced simplicial sphere do not necessarily have an ear decomposition.
	\end{abstract}
	\section{Introduction}
	A simplicial complex is called $k$-neighborly if every subset of vertices of size at most $k$ is the set of vertices of one of its faces. Neighborly complexes, especially neighborly polytopes and spheres, are interesting objects to study. In the seminal work of McMullen \cite{M} and Stanley \cite{St}, it was shown that in the class of polytopes and simplicial spheres of a fixed dimension and with a fixed number of vertices, the cyclic polytope simultaneously maximizes all the face numbers. The $d$-dimensional cyclic polytope is $\lfloor \frac{d}{2}\rfloor$-neighborly. Since then, many other classes of neighborly polytopes have been discovered. We refer to \cite{G}, \cite{P} and \cite{Sh} for examples and constructions of neighborly polytopes. Meanwhile, the notion of neighborliness was extended to other classes of objects: for instance, neighborly cubical polytopes were defined and studied in \cite{JR}, \cite{JZ}, and \cite{SZ}, and neighborly centrally symmetric polytopes and spheres were studied in \cite{B}, \cite{DT}, \cite{J} and \cite{LN}. 
	
	In this paper we discuss a similar notion for balanced simplicial complexes. Balanced complexes were defined by Stanley in \cite{St2}, where they were called completely balanced. A $(d-1)$-dimensional simplicial complex is called balanced if its graph is $d$-colorable. For instance, the barycentric subdivision of regular CW complexes and order complexes are balanced. We say that a balanced simplicial complex is \emph{balanced $k$-neighborly} if every set of $k$ or fewer vertices with \textit{distinct} colors forms a face. For example, if $\Delta_1$ and $\Delta_2$ are balanced $k$-neighborly spheres, then the join $\Delta= \Delta_1*\Delta_2$ is also a balanced neighborly $k$-sphere, and we call $\Delta$ \emph{join-decomposable}. However, apart from the cross-polytopes, it is not known whether other \emph{join-indecomposable} balanced $k$-neighborly polytopes or spheres exist. To the best of our knowledge, no examples of such objects appear in the current literature, even for $k=2$. As for balanced 2-neighborly manifolds, one such construction that triangulates the sphere bundle is given in \cite{KN}; it is also a minimal balanced triangulation of the underlying topological space.

    This more or less explains why so far there is even no plausible sharp upper bound conjecture for balanced spheres or manifolds. The goal of this paper is to partially remedy this situation by searching for balanced neighborly spheres and manifolds of lower dimensions. It turns out that even in the lower dimensional cases balanced neighborly spheres or manifolds with a given number of vertices do not always exist.
    \begin{itemize}
    	\item The octahedral 3-sphere is the only balanced $2$-neighborly $3$-sphere with less than $16$ vertices.
    	\item There is a unique balanced 2-neighborly 4-sphere with 15 vertices, known as $^4 15^5_2$ in \cite{KL}.
    	\item There exists a balanced 3-neighborly non-spherical 5-manifold with 18 vertices.
    	\item There are two constructions of balanced 2-neighborly 3-manifolds with 16 vertices; one triangulates the sphere, and the other triangulates the lens space $L(3,1)$. 
    \end{itemize}
	
	In a different direction, it is also interesting to ask whether every rank-selected subcomplex of a balanced simplicial polytope or sphere has a convex ear decomposition. This statement, if true, would imply that rank-selected subcomplexes of balanced simplicial polytopes possess certain weak Lefschetz properties, see Theorem 3.9 in \cite{Sw}. As a consequence, it would also provide an alternative proof of the balanced Generalized Lower Bound Theorem, see Theorem 3.3 and Remark 3.4 in \cite{MS}. We present an example giving a negative answer to this question for 3-dimensional spheres.
	
	The structure of this manuscript is as follows. In Section 2, after reviewing basic definitions, we establish basic properties of balanced neighborly spheres; in particular, we prove that for some values of $f_0$, such spheres cannot exist. In Section 3 we discuss how to find balanced $k$-neighborly $(2k-1)$-manifolds from less neighborly balanced $(2k-2)$-spheres, for $k=2,3$. In Section 4, we construct a balanced 2-neighborly 3-sphere with 16 vertices. In Section 5, we present the balanced 2-neighborly triangulation of $L(3,1)$ with 16 vertices. In Section 6 we provide a way to construct balanced spheres whose rank-selected subcomplex does not have an ear decomposition.
	
	\section{Basic properties of balanced neighborly spheres}
	A \textit{simplicial complex} $\Delta$ with vertex set $V$ is a collection of subsets
	$\sigma\subseteq V$, called \textit{faces}, that is closed under inclusion, and such that for every $v \in V$, $\{v\} \in \Delta$. For $\sigma\in \Delta$, let $\dim\sigma:=|\sigma|-1$ and define the \textit{dimension} of $\Delta$, $\dim \Delta$, as the maximum dimension of the faces of $\Delta$. A \emph{facet} is a maximal face under inclusion. We say that a simplicial complex $\Delta$ is \textit{pure} if all of its facets have the same dimension.

	If $\Delta$ is a simplicial complex and $\sigma$ is a face of $\Delta$, the \textit{star} of $\sigma$ in $\Delta$ is $\st_\Delta \sigma:= \{\tau \in\Delta: \sigma\cup\tau\in\Delta \}$. We also define the \textit{link} of $\sigma$ in $\Delta$ as $\lk_\Delta \sigma:=\{\tau-\sigma\in \Delta: \sigma\subseteq \tau\in \Delta\}$, and the \textit{deletion} of a subset of vertices $W$ from $\Delta$ as $\Delta\backslash W:=\{\sigma\in\Delta:\sigma\cap W=\emptyset\}$. If $\Delta_1$ and $\Delta_2$ are simplicial complexes on disjoint vertex sets, then the join of $\Delta_1$ and $\Delta_2$, denoted $\Delta_1*\Delta_2$, is the simplicial complex with vertex set $V(\Delta_1)\cup V(\Delta_2)$ whose faces are $\{\sigma_1\cup\sigma_2:\sigma_1\in \Delta_1,\sigma_2\in\Delta_2\}$.
	
	If $\Delta$ is a pure $(d-1)$-dimensional complex such that every $(d-2)$-dimensional face of $\Delta$ is contained in at most two facets, then the \emph{boundary complex} of $\Delta$ consists of all $(d-2)$-dimensional faces that are contained in exactly one facet, as well as their subsets. A simplicial complex $\Delta$ is a \textit{simplicial sphere} (resp. \emph{simplicial ball}) if the geometric realization of $\Delta$ is homeomorphic to a sphere (resp. ball). The boundary complex of a simplicial $d$-ball is a simplicial $(d-1)$-sphere. A simplicial sphere is called \textit{polytopal} if it is the boundary complex of a convex polytope. For instance, the boundary complex of an octahedron is a polytopal sphere; we will refer to it as an octahedral sphere. 
	
	For a fixed field or group $\field$, we say that $\Delta$ is a $(d-1)$-dimensional \textit{$\field$-homology sphere} if $\tilde{H}_i(\lk_\Delta \sigma;\field)\cong \tilde{H}_i(\mathbb{S}^{d-1-|\sigma|};\field)$ for every face $\sigma\in\Delta$ (including the empty face) and $i\geq -1$. A \emph{homology $d$-ball} (over $\field$) is a $d$-dimensional simplicial complex $\Delta$ such that (i) $\Delta$ has the same homology as the $d$-dimensional ball, (ii) for every face $F$, the link of $F$ has the same homology as the $(d-|F|)$-dimensional ball or sphere, and (iii) the boundary complex $\partial\Delta$ is a homology $(d-1)$-sphere. The classes of simplicial $(d-1)$-spheres and homology $(d-1)$-spheres coincide when $d\leq 3$. From now on all homology are computed with coefficients in $\Z$ and we will omit it from our notation. 
	
	Next we define a special structure that exists in some pure simplicial complexes.
	\begin{definition}
		An \textit{ear decomposition} of a pure $(d-1)$-dimensional simplicial complex $\Delta$ is an ordered sequence $\Delta_1,\Delta_2,\cdots, \Delta_m$ of pure $(d-1)$-dimensional subcomplexes of $\Delta$ such that:
		\begin{enumerate}
			\item $\Delta_1$ is a simplicial $(d-1)$-sphere, and for each $j=2,3,\cdots,m$, $\Delta_j$ is a
			simplicial $(d-1)$-ball.
			\item For $2\leq j\leq m$, $\Delta_j\cap (\cup_{i=1}^{j-1}\Delta_i)=\partial\Delta_j$.
			\item $\cup_{i=1}^m \Delta_i=\Delta.$
		\end{enumerate}
		We call $\Delta_1$ the \textit{initial complex}, and each $\Delta_j$, $j\geq 2$, an \textit{ear of this decompostion}. Notice that this definition is more general than Chari's original definition of a \emph{convex ear decomposition}, see \cite[Section 3.2]{C}, where the $\Delta_i$'s are required to be subcomplexes of the boundary complexes of polytopes. In particular, if a complex has no ear decomposition, then it has no convex ear decomposition. However, by the Steinitz theorem, all simplicial 2-spheres are polytopal, and hence also all simplicial 2-balls can be realized as subcomplexes of the boundary complexes of 3-dimensional polytopes. So for 2-dimensional simplicial complexes, the notion of an ear decomposition coincides with that of a convex ear decomposition.
	\end{definition}
	
	A $(d-1)$-dimensional simplicial complex $\Delta$ is called \textit{balanced} if the graph of $\Delta$ is $d$-colorable, or equivalently, there is a coloring map $\kappa: V\to [d]$ such that $\kappa(x)\neq \kappa(y)$ for any edge $\{x,y\}\in\Delta$. Here $[d]=\{1,2,\cdots,d\}$ is the set of colors. We denote by $V_i$ the set of vertices of color $i$. A balanced simplicial complex is called \textit{balanced $k$-neighborly} if every set of $k$ or fewer vertices with distinct colors forms a face. We say $e$ is a \emph{missing colored edge} if $e\notin \Delta$ and the vertices of $e$ have distinct colors. For $S\subseteq [d]$, the subcomplex $\Delta_S:=\{F\in \Delta: \kappa(F)\subseteq S\}$ is called the \textit{rank-selected subcomplex} of $\Delta$. We also define the \textit{flag $f$-vector} $(f_S(\Delta):S\subseteq [d])$ and the \textit{flag $h$-vector} $(h_S(\Delta):S\subseteq [d])$ of $\Delta$, respectively, by letting  $f_S(\Delta):=\#\{F\in \Delta: \kappa(F)=S\}$, where $f_{\emptyset}(\Delta)=1$, and $h_S(\Delta):=\sum_{T\subseteq S}(-1)^{\#S-\#T}f_T(\Delta)$. The usual $f$-numbers and $h$-numbers can be recovered from the relations $f_{i-1}(\Delta)=\sum_{\#S=i}f_S(\Delta)$ and $h_{i}(\Delta)=\sum_{\#S=i}h_S(\Delta)$. 
	
	In the remainder of this section, we establish some restrictions on the possible size of color sets of balanced neighborly spheres.
		\begin{lemma}\label{prop: d=2k k-neighborly}
			Let $\Delta$ be a balanced $k$-neighborly homology $(2k-1)$-sphere. Then $\Delta$ has the same number of vertices of each color. In particular, $f_0(\Delta)=2k\ell$ for some $\ell\geq 2$.
		\end{lemma}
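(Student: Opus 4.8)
The plan is to use $k$-neighborliness to compute all flag $f$-numbers of low rank, convert them to flag $h$-numbers, and then apply the Dehn–Sommerville relations to force a symmetry among the color-class sizes.

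First I would record two structural observations. Since any two vertices of a face span an edge and edges are properly colored, every facet of $\Delta$ meets each of the $2k$ color classes exactly once; hence $\Delta$ is pure of dimension $2k-1$, and, writing $n_i:=|V_i|$, each $n_i\ge 2$ --- if some color class were a single vertex $v$, then $v$ would lie in every facet, making $\Delta$ a cone and contradicting $\tilde H_{2k-1}(\Delta;\field)\cong\tilde H_{2k-1}(\Sp^{2k-1};\field)\ne 0$.

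Next, for $T\subseteq[2k]$ with $|T|\le k$, balanced $k$-neighborliness says every transversal of $\{V_i:i\in T\}$ is a face, so $f_T(\Delta)=\prod_{i\in T}n_i$. Substituting this into $h_S(\Delta)=\sum_{T\subseteq S}(-1)^{|S|-|T|}f_T(\Delta)$ and factoring, one gets $h_S(\Delta)=\prod_{i\in S}(n_i-1)$ whenever $|S|\le k$. Now invoke the flag Dehn--Sommerville relations $h_S(\Delta)=h_{[2k]\setminus S}(\Delta)$, valid for homology spheres: when $|S|=k$ the complement is again a $k$-subset, so
\[
\prod_{i\in S}(n_i-1)=\prod_{i\notin S}(n_i-1)\qquad\text{for every }k\text{-subset }S\subseteq[2k].
\]
Because each $n_i-1\ge 1$, the value $\prod_{i\in S}(n_i-1)$ is independent of the $k$-subset $S$; comparing two $k$-subsets that differ in exactly one element yields $n_i=n_j$ for all colors $i,j$. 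Thus every color class has the same cardinality $n\ge 2$, and $f_0(\Delta)=2kn$, so we take $l=n$.

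The only non-bookkeeping ingredient is the flag Dehn--Sommerville relation $h_S=h_{[2k]\setminus S}$, which is classical for homology (indeed Eulerian) spheres. If one prefers to avoid citing it, note that $k$-neighborliness identifies $\Delta_S$ with the join $V_{i_1}*\cdots*V_{i_k}$ of the $k$ color classes when $|S|=k$, so by the K\"unneth formula for joins $\tilde\beta_{k-1}(\Delta_S;\field)=\prod_{i\in S}(n_i-1)$ with all lower reduced Betti numbers vanishing; combined with the fact that complementary rank-selected subcomplexes of a homology sphere have matching top Betti numbers, this gives the same identity. I do not anticipate a real obstacle here: the inclusion--exclusion collapse and the ``swap one color'' comparison are routine, and the topological inputs are off the shelf.
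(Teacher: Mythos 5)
Your proof is correct and follows essentially the same route as the paper's: both arguments reduce to the identity $\prod_{i\in W}(n_i-1)=\prod_{i\notin W}(n_i-1)$ for every $k$-subset $W\subseteq[2k]$, obtained from $k$-neighborliness plus the flag Dehn--Sommerville symmetry $h_S=h_{[2k]\setminus S}$, and then compare $k$-subsets differing in one element. The paper reaches $\prod_{i\in W}(n_i-1)$ via the observation that $\Delta_W$ is a join of $0$-dimensional complexes and the $h$-vector join formula, whereas you compute flag $f$- and $h$-numbers directly by inclusion--exclusion; these are cosmetically different ways of saying the same thing, and you usefully make explicit the step (each $n_i\ge 2$, via the cone argument) that the paper leaves implicit but needs in order to cancel factors.
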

		\begin{proof}
			Let $W\subseteq [2k]$ be an arbitrary subset of the set of the colors with $|W|=k$. Since $\Delta$ is balanced $k$-neighborly, $\Delta_W$ is also balanced $k$-neighborly, and hence $\Delta_W$ is the join of $k$ color sets of colors in $W$, each considered as a 0-dimensional complex. By the definition of the join and the flag $h$-numbers, we have $f_{U\cup \{i\}}(\Delta)=f_{U}(\Delta)f_{\{i\}}(\Delta)$ and hence $h_{U\cup \{i\}}(\Delta)=h_{U}(\Delta)h_{\{i\}}(\Delta)$ for all $i\in W$, $U\subset W$ and $i\notin U$. Therefore,
			\[\prod_{i\in W}(|V_i|-1)=\prod_{i\in W}h_{\{i\}}(\Delta)=h_{W}(\Delta)\stackrel{(*)}{=}h_{[2k]\backslash W}(\Delta)=\prod_{i\in[2k]\backslash W}h_{\{i\}}(\Delta)=\prod_{i\in[2k]\backslash W}(|V_{i}|-1),\]
			where $(*)$ follows from the Dehn-Sommerville relations.
			Since $W$ is an arbitrary $k$-subset of $[2k]$, it follows that each color set in $\Delta$ must have the same size.
		\end{proof}
		\begin{remark}\label{rmk: extension from 3-sphere to 3-manifold}
			Lemma \ref{prop: d=2k k-neighborly} not only holds for homology $(2k-1)$-spheres but also for orientable homology $(2k-1)$-manifolds. Indeed by replacing the flag $h$-numbers with the flag $h''$-numbers (see \cite{JKMNS} for definition), Theorem 4.1 in \cite{JKMNS} gives $h''_W(\Delta)=h''_{[2k]\backslash W}(\Delta)$, which further implies that $h_W(\Delta)=h_{[2k]\backslash W}(\Delta)$ since both $W$ and $[2k]\backslash W$ are of size $k$. The rest of the proof is the same.
			
			Unfortunately, the above lemma is not sufficient to tell whether a balanced $k$-neighborly homology $(2k-1)$-sphere or manifold with $2k\ell$ vertices can exist for given $k, \ell\geq 2$.
		\end{remark}

		\section{Balanced neighborly $(d-1)$-manifolds with $3d$ vertices}
		In this section, we consider balanced $\lfloor \frac{d}{2}\rfloor$-neighborly $(d-1)$-manifolds (for $d=3,4,5$) with each color set of size 3. We begin with the following lemma.
		\begin{lemma}\label{lm: intersection of links is sphere}
			Let $d\geq 4$. If $\Delta$ is a balanced homology $(d-1)$-sphere and $V_d=\{v_1,v_2,v_3\}$ is the set of vertices of color $d$, then $\lk_\Delta v_i\cap \lk_\Delta v_j$ is a homology $(d-2)$-ball  for any $1\leq i<j\leq 3$, and $\cap_{k=1}^{3}\lk_\Delta v_k$ is a homology $(d-3)$-sphere. 
		\end{lemma}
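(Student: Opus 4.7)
The plan is to cover $\Delta$ by the three closed stars $\st v_1,\st v_2,\st v_3$ and extract the topological content from repeated applications of Mayer--Vietoris. The combinatorial identity driving everything is
\[
\st_\Delta v_1 \cup \st_\Delta v_2 = \Delta \setminus v_3,
\]
together with its cyclic analogues and their link-of-face versions. It holds because every facet of $\Delta$ carries a unique color-$d$ vertex, and every face $\tau$ with $v_3\notin\tau$ extends to a ridge still avoiding $v_3$; such a ridge lies in exactly two facets whose color-$d$ completions are two distinct elements of $\{v_1,v_2,v_3\}$, so at least one of $\tau\cup\{v_1\}, \tau\cup\{v_2\}$ belongs to $\Delta$. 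The analogous statement at the link level reads $\lk_\Delta v_i = B_{ij}\cup B_{ik}$, where $B_{ij}:=\lk_\Delta v_i\cap\lk_\Delta v_j$ and $\{i,j,k\}=\{1,2,3\}$.

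Mayer--Vietoris applied to $\Delta\setminus v_3 = \st v_1 \cup \st v_2$ now yields the homology. The two stars are contractible cones, $\Delta\setminus v_3$ is a homology $(d-1)$-ball (removal of a single vertex from a homology sphere), and $\st v_1 \cap \st v_2 = B_{12}$ because $\{v_1,v_2\}\notin\Delta$; substituting into the long exact sequence forces $\tilde H_*(B_{12})=0$. For the triple intersection $T := \bigcap_{k=1}^{3}\lk_\Delta v_k$ I would apply Mayer--Vietoris to $\lk_\Delta v_1 = B_{12}\cup B_{13}$ with intersection $T$: since $\lk_\Delta v_1$ is a homology $(d-2)$-sphere and the two pieces $B_{12},B_{13}$ are acyclic, the sequence forces $\tilde H_*(T)$ to agree with the homology of $S^{d-3}$.

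The remaining content of the lemma is the link-of-a-face condition in the definitions of homology ball and homology sphere. For $\sigma\in B_{ij}$ one has $\lk_{B_{ij}}\sigma = \lk_{\lk_\Delta\sigma}(v_i)\cap \lk_{\lk_\Delta\sigma}(v_j)$, where $\lk_\Delta\sigma$ is a smaller balanced homology sphere whose color-$d$ vertex set is a subset of $\{v_1,v_2,v_3\}$ containing $\{v_i,v_j\}$. If $v_k$ also survives (equivalently $\sigma\in T$), the same Mayer--Vietoris argument carried out inside $\lk_\Delta\sigma$ gives $\tilde H_*(\lk_{B_{ij}}\sigma)=0$; otherwise $\lk_\Delta\sigma$ has only two color-$d$ vertices, the identity $\st v_i\cup \st v_j = \lk_\Delta\sigma$ holds, and the same Mayer--Vietoris computation delivers the homology of $S^{d-|\sigma|-2}$ instead. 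Either way $\lk_{B_{ij}}\sigma$ has the homology of a ball or sphere of the correct dimension. As a by-product, $\partial B_{ij}$ consists of exactly those $\sigma$ for which $\lk_{B_{ij}}\sigma$ is acyclic, and this locus is precisely the subcomplex $T$; for $\sigma\in T$ the sphere link condition on $T$ follows by the same dichotomy, applied to the triple intersection in $\lk_\Delta\sigma$.

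The main obstacle is establishing the combinatorial identity $\st v_1 \cup \st v_2 = \Delta \setminus v_3$; once that is in hand, everything else is a routine sequence of Mayer--Vietoris computations inside $\Delta$ and its face links. The remaining delicacy is purely bookkeeping: at each stage one tracks which of the three color-$d$ vertices survive in the link sphere under consideration and invokes either the $|V_d|=2$ or the $|V_d|=3$ version of the Mayer--Vietoris computation as appropriate.
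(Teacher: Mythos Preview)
Your argument is correct and follows the same overall strategy as the paper's proof: establish a covering identity coming from the pseudomanifold property, apply Mayer--Vietoris to compute the homology of $\Sigma=B_{ij}$ and $\Gamma=T$, and then recurse into face links to verify the link conditions in the definitions of homology ball and sphere.

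The one genuine difference is the choice of cover. The paper works inside the rank-selected subcomplex, using $\lk_\Delta v_i\cup\lk_\Delta v_j=\Delta_{[d-1]}$ for the first step and $\lk_\Delta v_k\cup\Sigma=\Delta_{[d-1]}$ for the second; this requires computing $H_*(\Delta_{[d-1]})$, which the paper does by observing that $\Delta_{[d-1]}$ is a deformation retract of $\Delta$ with three points removed. You instead use $\st_\Delta v_1\cup\st_\Delta v_2=\Delta\setminus v_3$ for the first step and $\lk_\Delta v_1=B_{12}\cup B_{13}$ for the second. Since the antistar $\Delta\setminus v_3$ is immediately acyclic and $\lk_\Delta v_1$ is already a homology $(d-2)$-sphere by hypothesis, your route avoids the separate computation of $H_*(\Delta_{[d-1]})$ and the deformation-retract argument. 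The paper, on the other hand, opens with a direct combinatorial identification of $\Gamma$ as the boundary complex of $\Sigma$ via the $6$-cycle structure of edge links, which you recover only implicitly through the dichotomy on whether $v_k$ survives in $\lk_\Delta\sigma$. Both are equivalent repackagings of the same Mayer--Vietoris mechanism.
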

		\begin{proof}
			Let $\{i, j, k\}=[3]$ be distinct, $\Sigma=\lk_\Delta v_i\cap \lk_\Delta v_j$ and $\Gamma=\cap_{k=1}^{3}\lk_\Delta v_k$. 
			We first prove that $\Sigma$ and $\Gamma$ have the same homology as a simplicial $(d-2)$-ball and simplicial $(d-3)$-sphere respectively. Since each $(d-2)$-face of $\Delta$ is contained in exactly 2 facets, it follows that $\lk_\Delta v_i\cup \lk_\Delta v_j=\Delta_{[d-1]}$. By the Mayer-Vietoris sequence, for any $n\geq 0$,
			\begin{equation}\label{eq:1}
			\cdots\to H_{n+1}(\Delta_{[d-1]})\to H_n(\Sigma)\to H_n(\lk_\Delta v_i)\oplus H_n(\lk_\Delta v_j)\to H_{n}(\Delta_{[d-1]})\to\cdots.
			\end{equation}
			Note that $\Delta_{[d-1]}$ is a deformation retract of $\Delta$ minus three points, hence $\beta_{d-2}(\Delta_{[d-1]})=2$ and $\beta_k(\Delta_{[d-1]})=0$ for $0\leq k\leq d-3$. We conclude from (\ref{eq:1}) that $\beta_k(\Sigma)=0$ for all $k\geq 0$. Since $\lk_\Delta v_k\cup \Sigma=\Delta_{[d-1]}$ and $\lk_\Delta v_k\cap \Sigma=\Gamma$, by the Mayer-Vietoris sequence we obtain
			\[	\cdots\to H_{n+1}(\Delta_{[d-1]})\to H_n(\Gamma)\to H_n(\lk_\Delta v_k)\oplus H_n(\Sigma)\to H_{n}(\Delta_{[d-1]}) \to\cdots.\]
			Hence $\beta_{d-3}(\Gamma)=1$ and $\beta_{k}(\Gamma)=0$ for $0\leq k\leq d-4$. 
			
			Next, for any $\tau\in \Gamma$, we have $\lk_\Sigma \tau=\lk_{\lk_\Delta \tau} v_i \cap \lk_{\lk_\Delta \tau} v_j$ and $\lk_\Gamma \tau=\cap_{i=1}^{3}\lk_{\lk_\Delta \tau} v_i$. Since $\lk_\Delta \tau$ is a balanced homology $(d-1-|\tau|)$-sphere, using the same argument as above, we may show that $\lk_\Sigma \tau$ and $\lk_\Gamma\tau$ have the same homology as a $(d-2-|\tau|)$-ball and $(d-3-|\tau|)$-sphere respectively. Therefore $\Gamma$ is a homology $(d-3)$-sphere. Finally, for any interior face $\sigma$ of $\Sigma$, $\lk_\Sigma \sigma=\lk_{\lk_\Delta v_i} \sigma=\lk_{\lk_\Delta v_j} \sigma$, and hence $\lk_\Sigma \sigma$ is a homology sphere. By definition we conclude that $\Sigma$ is a homology $(d-2)$-ball.
		\end{proof}
		\begin{remark}\label{rm: 1}
			The complex $\Gamma$ in Lemma \ref{lm: intersection of links is sphere} is not balanced, since $\Gamma$ is $(d-1)$-colorable instead of being $(d-2)$-colorable. 
		\end{remark}
	
		\begin{proposition}\label{prop: 2-nbly 3-sphere 12 ver}
			The only balanced 2-neighborly 3-manifold with 12 vertices triangulates the nonorientable $\Sp^2$-bundle over $\Sp^1$.
		\end{proposition}
		
		\begin{proof}
			\begin{figure}
					\centering
					\includegraphics[scale=0.9]{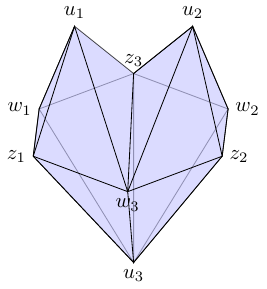}
					\hspace{20mm}
					\includegraphics[scale=0.9]{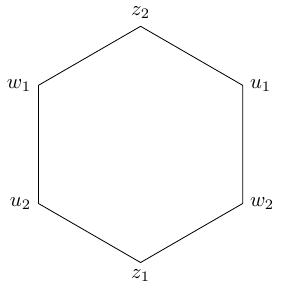}
					\caption{Left: triangulation of the vertex link $\lk_\Delta v_i$ for $v_i\in V_4$, where $\{u_1,u_2,u_3\}$, $\{w_1,w_2,w_3\}$ and $\{z_1,z_2,z_3\}$ are the three other color sets. Right: the complex $\Sigma$.}
					\label{fig: 12-vertex}
			\end{figure}
			Let $\Delta$ be a balanced 2-neighborly 3-manifold with 12 vertices. Its $f$-vector is $f(\Delta)=(1,12, 54, 84, 42)$. By Lemma \ref{prop: d=2k k-neighborly} and Remark \ref{rmk: extension from 3-sphere to 3-manifold}, each color set of $\Delta$ has three vertices. We let $V_4=\{v_1,v_2,v_3\}$ be the set of vertices of color 4. Since $\Delta$ is balanced 2-neighborly, each $\lk_\Delta v_i$ is a 2-sphere with 9 vertices, its $f$-vector is (1,9,21,14). Furthermore, the balancedness of $\Delta$ implies that every vertex $u\in \lk_\Delta v_i$ has $\deg_{\lk_\Delta v_i} u=4$ or 6. If $x$ is the number of vertices of degree 6 in $\lk_\Delta v_i$, then 
			\[4(9-x)+6x=\sum_{u\in \lk_\Delta v_i}\deg(\lk_{\lk_\Delta v_i}u)=2 f_1(\lk_\Delta v_i)=42,\]
			and hence $x=3$. A balanced 2-sphere with 9 vertices, 3 of which have degree 6, is unique up to isomorphism, as shown in Figure \ref{fig: 12-vertex}. Hence all vertex links in $\Delta$ are combinatorially isomorphic.
			
			Since $f_i(\Delta_{[3]})=f_i(\Delta)-\sum_{j=1}^3 f_{i-1}(\lk_\Delta v_j)$ for all $i\leq 2$, we have $f(\Delta_{[3]})=(1, 9, 27, 21)$. Let $i,j,k\in [3]$ be distinct and $\Sigma:=\lk_\Delta v_i\cap \lk_\Delta v_j$. Any facet $F$ of $\Sigma$ are 2-dimensional, for otherwise if $F$ is an edge, then $\lk_\Delta F$ is either a 4-cycle or 6-cycle, where in both cases $v_i$ and $v_j$ share at least one common neighbor $w$, i.e., $F\cup \{w\}\in \Sigma$, a contradiction. Similarly, the facet cannot be 0-dimensional. Also the facets of $\Sigma$ do not belong to $\lk_\Delta v_k$. Hence $\Sigma$ is a pure 2-dimensional subcomplex of $\lk_\Delta v_i$ with 9 vertices and $f_2(\Delta_{[3]})-f_2(\lk_\Delta v_k)=7$ triangles.
			
			On the other hand, for any $u\in \Sigma$, the vertex link $\lk_\Delta u$ is isomorphic to Figure \ref{fig: 12-vertex} and $v_i, v_j\in \lk_\Delta u$. The intersection of links of arbitrary two vertices of the same color has the following property 
			\begin{center}
				$(*)$: \quad$\lk_\Sigma u=\lk_{\lk_\Delta u} v_i\cap \lk_{\lk_\Delta u} v_j$ is either an edge, or a path of length 3.
			\end{center}
			
			Since $\Sigma$ is 2-dimensional, each connected component of $\Sigma$ has at least 3 vertices. If there are three components, then $\Sigma$ is the disjoint union of three triangles, contradicting that $f_2(\Sigma)=7$. Otherwise, if $\Sigma$ is connected, then by observation $(*)$ we have that $\Sigma$ is a triangulated 2-manifold (with boundary). Since $21=3f_2(\Sigma)=\sum_{u\in\Sigma} f_1(\lk_\Sigma u) $ and by observation $(*)$ $f_1(\lk_\Sigma u)\in \{1,3\}$, the links of three vertices in $\Sigma$ are single edges, while the rest are paths of length 3. However, enumeration based on observation $(*)$ yields that there is no such complex $\Sigma$.
			
			The last case is that $\Sigma$ has two connected components. From observation $(*)$ we see that each component cannot have 4 or 5 vertices. If one component is the triangle, then the other component (as a 6-triangle subcomplex of the 9-vertex balanced 2-sphere) must be the triangulated annulus as shown in Figure 1. By symmetry $\lk_\Delta v_i\cap \lk_\Delta v_k$ and $\lk_\Delta v_j\cap \lk_\Delta v_k$ are also isomorphic to $\Sigma$. In this way we determine $\Delta_{[3]}=\lk_\Delta v_i\cup \lk_\Delta v_j$: it is the union of three octahedral 2-spheres, each having a pair of antipodal facets $(F_1, F_2)$, $(F_2, F_3)$ and $(F_3, F_1)$, respectively. This also determines $\Delta$, which is the balanced triangulation of the nonorientable $\Sp^2$-bundle over $\Sp^1$ known as $^{3}12^{83}_2$ in \cite{KL}.
	\end{proof}
\begin{remark}
	The balanced 2-neighborly manifold $^{3}12^{83}_2$ is also known as $BM_4$ defined in \cite{KN}. In particular in \cite[Proposition 6.9]{KN} it is shown that $^{3}12^{83}_2$ is the only balanced 12-vertex 3-manifold with $\beta_1\neq 0$. See \cite{KN} and \cite{Z} for extension in higher dimensional cases.
\end{remark}
 
Next we characterize all balanced 3-spheres with each color sets of size 3.
	
		\begin{lemma}\label{lm: 3 triangulations of balanced 3-sphere}
			Up to an isomorphism, there are four triangulations of balanced $3$-spheres with each color set of size $3$.
		\end{lemma}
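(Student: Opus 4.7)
The plan is to extend the vertex-link analysis of Proposition~\ref{prop: 2-nbly 3-sphere 12 ver} beyond the balanced 2-neighborly setting and carry out a finite enumeration. With $f_0(\Delta) = 4 \cdot 3 = 12$, the Dehn--Sommerville relations force $h(\Delta) = (1, 8, h_2, 8, 1)$ and $f_1(\Delta) = 30 + h_2$, so the hypothesis $f_1 \leq 50$ translates into $h_2 \leq 20$. Equivalently, among the $\binom{4}{2}\cdot 9 = 54$ potential bichromatic pairs, at least $4$ and (using the generalized lower bound inequality $h_2 \geq h_1 = 8$) at most $16$ are missing as edges of $\Delta$.

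Next, I would fix one color class, say $V_4 = \{v_1, v_2, v_3\}$, and classify the possibilities for $\lk_\Delta v_i$. Each such link is a balanced $2$-sphere on at most $9$ vertices with color classes of size at most $3$. The Euler/degree calculation from the proof of Proposition~\ref{prop: 2-nbly 3-sphere 12 ver} bounds its $f$-vector, and a short case analysis on the color distribution yields the admissible isomorphism types. A vertex of $V_j$ (for $j \neq 4$) that fails to appear in $\lk_\Delta v_i$ corresponds to a missing bichromatic edge incident to $v_i$ in $\Delta$, so the link types are tightly coupled to the global missing-edge budget.

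Third, I would invoke Lemma~\ref{lm: intersection of links is sphere}: for each pair $i \neq j$, $\lk_\Delta v_i \cap \lk_\Delta v_j$ is a homology $2$-ball whose boundary is the $1$-sphere $\bigcap_{k=1}^{3}\lk_\Delta v_k$. The interior edges of this ball are precisely the bichromatic edges of $\lk_\Delta v_i \cap \lk_\Delta v_j$ that are missing from the third link $\lk_\Delta v_k$. Combining this with the admissible link types at color $4$, and then iterating the same analysis for the other three color classes, yields a small collection of candidate triangulations. A direct verification then shows that exactly three of them are non-isomorphic balanced $3$-spheres.

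The main obstacle is the combinatorial bookkeeping: for each configuration of the three color-$4$ links, one must check the consistency of all three pairwise intersections against the global missing-edge graph, and then against the analogous constraints coming from the other three color classes. What keeps the analysis finite is the tight budget $h_2 \in [8, 20]$ together with the rigid structure of balanced $2$-spheres with color classes of size at most $3$ obtained as in Proposition~\ref{prop: 2-nbly 3-sphere 12 ver}. Without the $50$-edge bound, the list of link types (and hence the gluing cases) would grow, and the enumeration would no longer be manageable by hand.
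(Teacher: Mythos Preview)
Your outline points in the right direction---classify the vertex links at one colour and use Lemma~\ref{lm: intersection of links is sphere} to control how they fit together---but as written it remains a plan rather than a proof, and it misses the two observations that make the paper's argument terminate in a handful of lines.

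First, the paper does not carry around a global missing-edge budget or iterate over all four colour classes. It fixes $V_4=\{v_1,v_2,v_3\}$ and observes that every balanced $2$-sphere with colour classes of size at most~$3$ is, up to isomorphism, one of exactly three complexes: the octahedral sphere $\Sigma_1$, the suspension of a $6$-cycle $\Sigma_2$, or the connected sum of two octahedra $\Sigma_3$, with $f_0=6,8,9$ respectively. You allude to ``a short case analysis on the colour distribution'' but never produce this list; having it in hand is what collapses the problem.

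Second, the edge bound is converted into a constraint on the three links by a single identity. Using flag Dehn--Sommerville ($h_{\{i,j\}}=h_{[4]\setminus\{i,j\}}$, hence $f_{\{i,j\}}=f_{[4]\setminus\{i,j\}}$), one gets
\[
f_1(\Delta)=2\sum_{i=1}^3 f_0(\lk_\Delta v_i),
\]
so $f_1\le 50$ forces $\sum f_0(\lk_\Delta v_i)\le 24$ with each summand in $\{6,8,9\}$. Meanwhile Lemma~\ref{lm: intersection of links is sphere} says more than pairwise intersections are balls: the three balls $B_i=\lk_\Delta v_j\cap\lk_\Delta v_k$ share a \emph{common} boundary cycle $c=\bigcap_k\lk_\Delta v_k$, and $\Delta_{[3]}=B_1\cup B_2\cup B_3$. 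Counting vertices in this decomposition gives $f_0(\lk_\Delta v_i)=f_0(c)+f_0(B_j\setminus c)+f_0(B_k\setminus c)$ with $f_0(c)+\sum f_0(B_i\setminus c)=9$. These two constraints together leave only the triples $(6,6,9)$, $(6,8,9)$, $(8,8,8)$, $(8,8,9)$, and each is disposed of in a sentence (the last one is impossible).

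Your ``interior edges of the pairwise ball are the bichromatic edges missing from the third link'' and the proposal to repeat the analysis at the other three colours are not wrong, but they are unnecessary once the above reduction is in place; without it, the ``combinatorial bookkeeping'' you mention has no visible endpoint.
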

		\begin{proof}
			Let $\Delta$ be such a sphere and let $V_4=\{v_1,v_2,v_3\}$. Each vertex link of $\Delta$ is a balanced 2-sphere with at most 9 vertices, hence it is either the octahedral sphere, the suspension of a 6-cycle, or the connected sum of two octahedral spheres. We denote these three 2-spheres as $\Sigma_1$, $\Sigma_2$ and $\Sigma_3$ respectively. By Lemma \ref{lm: intersection of links is sphere}, $\Delta_{[3]}$ is the union of three triangulated 2-balls $B_i=\lk_\Delta v_j\cap \lk_\Delta v_k$, where $\{i,j,k\}=[3]$, glued along their common boundary complex $c$. Assume that $f_0(\lk_\Delta v_i)\leq f_0(\lk_\Delta v_j)$ when $i\leq j$. An easy counting leads to
			\begin{equation}\label{cond}
				f_0(\Delta_{[3]})=f_0(c)+\sum_{i=1}^{3} f_0(B_i\backslash c)=9, \quad f_0(\lk_\Delta v_i)=f_0(c)+f_0(B_j\backslash c)+f_0(B_k\backslash c)\in \{6,8,9\},
			\end{equation}
		    where $f_0(B_i\backslash c)$ counts the number of interior vertices of $B_i$. By the Dehn-Sommerville relations, the $f$-vector of any triangulated 3-manifold satisfies that $f_1=f_3+f_0$. Since every facet of $\Delta$ contains exactly one vertex from $V_4$, we have that $f_3(\Delta)=\sum_{i=1}^3 f_2(\lk_\Delta v_i)$ and hence $$f_1(\Delta)=\sum_{i=1}^3 f_2(\lk_\Delta v_i)+f_0(\Delta)=\sum_{i=1}^{3}(2f_0(\lk_\Delta v_i)-4)+12=2\sum_{i=1}^{3}f_0(\lk_\Delta v_i)\leq 54,$$we enumerate the combinatorial type of each $f_0(\lk_\Delta v_i)$ as follows:
		
		  {\bf Case 1:} $\lk_\Delta v_1 \cong \Sigma_1$. It follows that $\lk_\Delta v_3$ is obtained from $\lk_\Delta v_2$ by a cross flip (see \cite{IKN} for a reference). Since $f_0(\Delta_{[3]})=9$, either $\lk_\Delta v_2\cong \Sigma_1$, $\lk_\Delta v_3\cong \Sigma_3$, and the cross flip replaces a 2-face of $\lk_\Delta v_2$ with its complement in the octahedral sphere. Or $\lk_\Delta v_2\cong \Sigma_2$, $\lk_\Delta v_3\cong \Sigma_3$, and the cross flip replaces the union of three 2-faces of $\lk_\Delta v_2$ with its complement in the octahedral sphere. In the first case the 3-sphere is the connected sum of two octahedral 3-spheres, which we denote as $S_1$. In the second case we obtain a 3-sphere $S_2$ (with $\lk_{S_2}v_i\cong \Sigma_i$ for $i\in [3]$). Their $f$-vectors are $f(S_1)=(1, 12, 42, 60, 30)$ and $f(S_2)=(1, 12, 46, 68, 34)$.
			
		{\bf Case 2:} $\lk_\Delta v_1\cong \Sigma_2$. In this case the number of missing colored edges in $\Delta_{[3]}$ is
		$\frac{9\cdot 6}{2}-f_1(\Delta_{[3]})=27-\sum_{i=1}^{3}f_0(\lk_\Delta v_i)$, which equals either 1,2 or 3. 
		
		{\bf Subcase 1:} $\lk_\Delta v_2\cong \lk_\Delta v_3\cong \Sigma_2$. By conditions (\ref{cond}), $c$ is a 6-cycle and  $\Delta_{[3]}\backslash c$ consists of three disjoint vertices of degree either 4 or 6. Note that in $\Sigma_2$ every pair vertices of degree 4 and degree 6 forms an edge. Since $\Delta_{[3]}$ has only three missing edges between vertices of different colors, the vertices in $\Delta_{[3]}\backslash c$ are of the same color and has degree 6. Hence $\Delta_{[3]}$ is the join of $c$ and three disjoint vertices and $\Delta$ is the join of two 6-cycles. Denote this sphere as $S_3$; its $f$-vector is $(1, 12, 48, 72, 36)$. 
			   
	    {\bf Subcase 2:} $\lk_\Delta v_i\cong \Sigma_i$ for $i=2,3$. Then $c$ is a 7-cycle; furthermore, $B_3$ has no interior vertices and $B_1, B_2$ have a unique interior vertex $b_1, b_2$ respectively. Since $\Delta_{[3]}$ has two missing colored edges, and three vertices of degree 6 form an empty triangle in $\lk_\Delta v_3\cong \Sigma_3$, WLOG assume that $\deg b_1=4$ and $\deg b_2=6$. The only vertex $b_3$ not connected to $b_2$ in $B_2$ must be the vertex of degree 6 in $\lk_\Delta v_1 \cong \Sigma_2$. Hence $B_3$ is the join of $b_3$ with the path of length 5, which is $c\backslash b_3$. But then $b_3$ is also the degree-6 vertex in $\lk_\Delta v_2\cong \Sigma_2$, and there is no way to triangulate $B_1$ such that it shares no common interior edge with $B_2$.
	    
	    {\bf Subcase 3:} $\lk_\Delta v_2\cong \lk_\Delta v_3\cong \Sigma_3$. Then $c$ is an 8-cycle and only $B_1$ has an interior vertex $a$. Also $\Delta_{[3]}$ has one missing colored edge, so $a$ is of degree 6. Three vertices of degree 6 in $\Sigma_3$ are of different colors, hence in $\lk_\Delta v_2$ and $\lk_\Delta v_3$ the other two vertices of degree 6 must be two pairs of antipodal vertices in $\lk_{\Delta_{[3]}} a$ (the other pair of antipodal vertices in $\lk_{\Delta_{[3]}} a$ is the missing edge in $\Delta_{[3]}$). In this way we recover $\lk_\Delta v_2=B_1\cup B_3, \lk_\Delta v_3=B_1\cup B_2$, where $B_1, B_2, B_3$ are the 2-balls shown in the figure below (from left to right); in particular, labels of the vertices represent the color and the blue (red resp.) edges form the missing triangle in $\lk_\Delta v_2$ ($\lk_\Delta v_3$ resp.). We call this 3-sphere $S_4$. Its $f$-vector is $(1, 12, 52, 80, 40)$.
	    
	    {\bf Case 3:} $\lk_\Delta v_1\cong \Sigma_3$. This is not possible by Proposition \ref{prop: 2-nbly 3-sphere 12 ver}.
	    \begin{figure}[h]
	    	\centering
	    	\begin{tikzpicture}
	    	\draw (1.5, 0.5)node[right]{3}--(0.5, 1.5)node[right]{1}--(-0.5, 1.5)node[left]{2}--(-1.5, 0.5)node[left]{3}--(-1.5, -0.5)node[left]{2}--(-0.5, -1.5)node[left]{1}--(0.5, -1.5)node[right]{3}--(1.5, -0.5)node[right]{2}--cycle;
	    	\draw [blue](-1.5, -0.5)--(1.5, 0.5);
	    	\draw [red] (-0.5, 1.5)--(0.5, -1.5);
	    	\draw (-0.5, 1.5)--(1.5, 0.5);
	    	\draw (0.5, -1.5)--(-1.5, -0.5);
	    	\draw (-1.5, 0.5)--(0,0)node{1}--(1.5, -0.5);
	    	\end{tikzpicture}
	    	\hspace{5mm}
	    	\begin{tikzpicture}
	    	\draw (1.5, 0.5)node[right]{3}--(0.5, 1.5)node[right]{1}--(-0.5, 1.5)node[left]{2}--(-1.5, 0.5)node[left]{3}--(-1.5, -0.5)node[left]{2}--(-0.5, -1.5)node[left]{1}--(0.5, -1.5)node[right]{3}--(1.5, -0.5)node[right]{2}--cycle;
	    	\draw [red](-0.5, 1.5)--(0.5, -1.5);
	    	\draw (-1.5, 0.5)--(-0.5, -1.5)--(-0.5, 1.5);
	    	\draw (0.5, -1.5)--(0.5, 1.5)--(1.5, -0.5);
	    	\end{tikzpicture}
	    	\hspace{5mm}
	    	\begin{tikzpicture}
	    		\draw (1.5, 0.5)node[right]{3}--(0.5, 1.5)node[right]{1}--(-0.5, 1.5)node[left]{2}--(-1.5, 0.5)node[left]{3}--(-1.5, -0.5)node[left]{2}--(-0.5, -1.5)node[left]{1}--(0.5, -1.5)node[right]{3}--(1.5, -0.5)node[right]{2}--cycle;
	    		\draw [blue] (-1.5, -0.5)--(1.5, 0.5);
	    	\draw (-1.5, 0.5)--(0.5, 1.5)--(-1.5, -0.5);
	    	\draw (1.5, 0.5)--(-0.5, -1.5)--(1.5, -0.5);
	    	\end{tikzpicture}

        \end{figure}
    	\end{proof}
		\begin{theorem}\label{prop: 2-nbly 4-sphere 15 ver}
			There exists a unique balanced 2-neighborly homology $4$-sphere $^{4} 15 ^{5}_2$ with each color set of size 3.
		\end{theorem}
		\begin{proof}
			Let $\Delta$ be such a sphere and let its color set $V_5=\{v_1,v_2,v_3\}$. By Alexander Duality, $\tilde{H_i}(\Delta_{\{4,5\}})\cong \tilde{H}_{3-i}(\Delta_{[3]})$.
			In particular, since $\Delta_{\{4,5\}}$ is balanced 2-neighborly, $\beta_2(\Delta_{[3]})=\beta_1(\Delta_{\{4,5\}})=4$ and $\beta_1(\Delta_{[3]})=0$. Hence \[f_2(\Delta_{[3]})=(f_1-f_0+\chi)(\Delta_{[3]})=\frac{9\cdot 6}{2}-9+5=23.\]
			By double counting, $\sum_{i=1}^{3} f_1(\lk_\Delta v_i)=\sum_{W=\{i,j,5\}\subseteq [5]} f_2(\Delta_{W})=\binom{4}{2}f_2(\Delta_{[3]})=138$. By Proposition \ref{prop: 2-nbly 3-sphere 12 ver} and Lemma \ref{lm: 3 triangulations of balanced 3-sphere}, $f_1(\lk_\Delta v_i)\in \{42, 46, 48, 52\}$, it follows that either $138=42+48\cdot2$, that is, $\lk_\Delta v_1\cong S_1$ and $\lk_\Delta v_2, \lk_\Delta v_3\cong S_3$; or $138=46\cdot3$ and $\lk_\Delta v_i\cong S_2$ for all $i$.
			
			Consider the first case above. $S_1$ is the connected sum of two octahedral 3-spheres. For any 2-subset $W\subset [4]$, the induced subcomplex $(S_1)_W$ is the union of two 4-cycles glued along an edge, so $f_1((S_1)_W)=7$. Similarly, $S_3$ is the join of two 6-cycles, so we have $(S_3)_W$ is either a 6-cycle or the bipartite graph $K_{3,3}$, i.e., $f_1((S_3)_W)=6$ or 9. Hence $$23=f_2(\Delta_{W\cup\{5\}})=\sum_{i=1}^{3}f_1((\lk_\Delta v_i)_W)\in \{19, 22, 25\},$$ a contradiction.
			
			Now we consider the second case, where all vertex links in $\Delta$ are isomorphic to $S_2$. Let $\Gamma=\lk_\Delta v_1\cap \lk_\Delta v_2\cap \lk_\Delta v_3$. The proof of Lemma \ref{lm: 3 triangulations of balanced 3-sphere} implies that for any vertex $p\notin V_5$, $\lk_\Gamma p=\lk_{\lk_\Delta p} v_1\cap \lk_{\lk_\Delta p} v_2\cap \lk_{\lk_\Delta p} v_3$ is a 5-cycle (as the boundary of the union of three 2-faces, where we apply the cross-flip). Hence $\Gamma$ must be the boundary of the icosahedron. Since all $\lk_\Delta v_i$ are isomorphic,  by Lemma \ref{lm: intersection of links is sphere} $\Gamma$ divides the $3$-sphere $\lk_\Delta v_1$ into two 3-balls, each having the same number of facets. If the facets of $\lk_\Delta v_1$ are labeled as in the link of vertex 1 in $^4 {15}^5_2$ (this is a vertex-transitive triangulation of 4-sphere whose vertex links are isomorphic to $S_2$, see \cite{KL}), then one such $\Gamma$ is the intersection of vertices $1,6,8$ in $^4 15^5_2$; we rename it to $\Gamma_1$. In this case, $\lk_\Delta v_1=B\cup_{\Gamma_1} B'$, where $B, B'$ are isomorphic 3-balls. We check by sage \cite{Sage} that all other subcomplexes in $\lk_\Delta v_1$ that are isomorphic to $\Gamma_1$ are of the form $\sigma(\Gamma_1)$, where $\sigma$ is an element in the permutation group of $\lk_\Delta v_1$ (of order 8). So it suffices to consider just $\Gamma_1$. To reconstruct $\lk_\Delta v_2$ and hence $\Delta$, note that $\lk_\Delta u_2$ has the decomposition $\lk_\Delta u_2=B'\cup_{\Gamma_1} B''\cong S_2$ for some 3-ball $B''$, and furthermore $B''\cong B'$. To decide $B''$ it is equivalent to finding a balanced simplicial isomorphism $f: B'\to B''$ with $B'\cap B''=B\cap B''=\Gamma_1$ and $f(\Gamma_1)=\Gamma_1$; in other words, $f$ is a permutation in $\mathrm{Aut}(\Gamma_1)$. We check by sage \cite{Sage} that the links of vertex 6,8 in $^415^5_2$ are the only candidates for $\lk_\Delta v_2$. Hence $\Delta={^4}{15}^5_2$. Indeed $\Delta$ is balanced: the color sets are $\{1,6,8\}$, $\{2,4,9\}$, $\{3,7,11\}$, $\{5,10,15\}$ and $\{12,13,14\}$.
		\end{proof}

		\begin{theorem}\label{cor: 3-nbly 5-sphere 18 ver}
			There exists a balanced $3$-neighborly non-spherical $5$-manifold with each color set of size 3.
		\end{theorem}
		\begin{proof}
			By Theorem \ref{prop: 2-nbly 4-sphere 15 ver}, if such 5-manifold exists, then all vertex links are isomorphic to $^4 15^5_2$, which we denote as $\Gamma$. Based on the list of facets of $\Gamma$ in \cite{manifold}, we take a color-preserving permutation $\sigma=(1,6,8)(2,4,9)(11,3,7)(10,15,5)(13,14,12)$. We choose $\sigma$ in such a way that $\sigma\notin \mathrm
			{Aut}(\Gamma)$ and furthermore, $\Gamma\cap \sigma(\Gamma)$, $\Gamma\cap \sigma^2(\Gamma)$ and $\sigma(\Gamma)\cap \sigma^2(\Gamma)$ are isomorphic homology manifolds with no interior faces of dimension $<2$ and with a common boundary $C$. By computer we check that $\Gamma\cup \sigma(\Gamma)\cup \sigma^2(\Gamma)$ is balanced 3-neighborly and $C$ is the vertex-transitive 3-manifold $^3 15^{15}_1$ that triangulates $\Sp^3/Q$ as in \cite{manifold}. Finally let $\Delta =\left( \Gamma * \{16\} \right) \cup \left( \sigma(\Gamma)*\{17\}\right) \cup \left( \sigma^2(\Gamma)*\{18\}\right) $, where $\{16,17,18\}$ are the vertices of color 6. By sage \cite{Sage} one verifies that all vertex links of $\Delta$ is isomorphic to $\Gamma$, which is known as a combinatorial 4-sphere (we say a simplicial complex is a combinatorial sphere if it is PL homeomorphic to the boundary of the simplex). Hence $\Delta$ is a combinatorial manifold that is balanced 3-neighborly.
		\end{proof}
	\begin{remark}
		The following properties of the balanced 3-neighborly 5-manifold found in the proof of Theorem \ref{cor: 3-nbly 5-sphere 18 ver} are verified by sage:
		\begin{enumerate}
			\item It is vertex-transitive and has the following generators of the automorphism group (of order 1080):
			\[(2,15)(4,5)(9,10)(12,17)(13,18)(14,16),\;\; (1,2)(3,15)(4,6)(5,7)(8,9)(10,11),\] \[(1,3)(6,7)(8,11)(12,17)(13,18)(14,16),\;\;(1,6,8)(2,3,17)(4,7,18)(5,15,10)(9,11,16),\] \[(1,13)(3,18)(6,14)(7,16)(8,12)(11,17).\]
			\item The homology groups of $\Delta$ are given by $(\Z, 0, \Z_2, 0,0,\Z)$.
			\item The $f$-vector of $\Delta$ is $(1, 18, 135, 540, 1035, 918, 306)$.			
		\end{enumerate}
	
		Furthermore by the Dehn-Sommerville relations, any balanced 3-neighborly 5-manifold having 3 vertex in each color set also has the $f$-vector $(1,18,135, 540,1035,918,306)$. Let $\Sigma$ be such a complex, $\{v_1, v_2, v_3\}$ a color set, $M_i=\lk_\Sigma v_j\cap \lk_\Sigma v_k$ for $\{i,j,k\}=[3]$ and $N=M_1\cap M_2\cap M_3$. If $F$ is an interior face of $M_i$, then $F\cup \{v_i\}\notin \Sigma$. Since $\Sigma$ is balanced 3-neighborly, $\lk_\Sigma v_1=M_2\cup M_3\cong M_1\cup M_3=\lk_\Sigma v_2$ and they have 102 facets, it follows that each $M_i$ has no interior vertices or edges and with 51 facets. Also since all vertex links in $\Sigma$ are isomorphic to $^4 {15}^5_2$, the same argument as in the proof of Theorem \ref{prop: 2-nbly 4-sphere 15 ver} implies that $N$ is a 3-manifold whose vertex links are isomorphic to the boundary of the icosahedron; indeed $^3{15}^{15}_1$ is one such example. We haven't checked if there exist other balanced 3-neighborly 5-manifolds. (It is not known if there exist 15-vertex non-vertex-transitive 3-manifolds whose vertex links are all isomorphic to the boundary of the icosahedron.)
	\end{remark}
	
	\section{Balanced $2$-neighborly $3$-sphere with $16$ vertices}
In this section we provide a balanced 2-neighborly triangulation of the 3-sphere. The construction is motivated by Lemma \ref{lm: intersection of links is sphere}.
\begin{construction}\label{Second Example}
	\begin{figure}[h]
		\centering
		\begin{tikzpicture}
		\newdimen\R
		\R=1.9cm
		\draw (0:\R)
		-- cycle (360:\R) node[right] {$w_3$}
		-- cycle (30:\R) node[above right] {$u_3$}
		-- cycle (60:\R) node[above right] {$v_3$}
		-- cycle (90:\R) node[above] {$w_1$}
		-- cycle (120:\R) node[above left] {$u_2$}
		-- cycle (150:\R) node[above left] {$v_1$}
		-- cycle (180:\R) node[left] {$u_1$}
		-- cycle (210:\R) node[below left] {$w_2$}
		-- cycle (240:\R) node[below left] {$v_4$}
		-- cycle (270:\R) node[below] {$u_4$}
		-- cycle (300:\R) node[below right] {$w_4$}
		-- cycle (330:\R) node[below right] {$v_2$};
		\draw[thick,red, fill=red!5] (330:\R) -- (30:\R) -- (90:\R)--cycle;
		\draw[thick,red] (330:\R) -- (120:\R) -- (300:\R)--(150:\R);
		\draw[thick,red, fill=red!5] (150:\R) -- (270:\R) -- (210:\R)--cycle;
		\draw [thick,blue] (0:\R)--(30:\R) --(60:\R)-- (90:\R)--(120:\R)--(150:\R)--(180:\R)--(210:\R) --(240:\R)--(270:\R)--(300:\R)--(330:\R)--(0:\R);
		\end{tikzpicture}
		\hspace{2mm}
		\begin{tikzpicture}
		\newdimen\R
		\R=1.9cm
		\draw (0:\R)
		-- cycle (360:\R) node[right] {$w_3$}
		-- cycle (30:\R) node[above right] {$u_3$}
		-- cycle (60:\R) node[above right] {$v_3$}
		-- cycle (90:\R) node[above] {$w_1$}
		-- cycle (120:\R) node[above left] {$u_2$}
		-- cycle (150:\R) node[above left] {$v_1$}
		-- cycle (180:\R) node[left] {$u_1$}
		-- cycle (210:\R) node[below left] {$w_2$}
		-- cycle (240:\R) node[below left] {$v_4$}
		-- cycle (270:\R) node[below] {$u_4$}
		-- cycle (300:\R) node[below right] {$w_4$}
		-- cycle (330:\R) node[below right] {$v_2$};
		\draw[thick,red] (0:\R) -- (60:\R) -- (120:\R)--cycle;
		\draw[thick,red] (150:\R) -- (0:\R) -- (180:\R)--(330:\R);
		\draw[thick,red] (180:\R) -- (300:\R) -- (240:\R)--cycle;
		\draw [thick,blue] (0:\R)--(30:\R) --(60:\R)-- (90:\R)--(120:\R)--(150:\R)--(180:\R)--(210:\R) --(240:\R)--(270:\R)--(300:\R)--(330:\R)--(0:\R);
		\end{tikzpicture}
		\hspace{2mm}
		\begin{tikzpicture}
		\newdimen\R
		\R=1.9cm
		\draw (0:\R)
		-- cycle (360:\R) node[right] {$w_3$}
		-- cycle (30:\R) node[above right] {$u_3$}
		-- cycle (60:\R) node[above right] {$v_3$}
		-- cycle (90:\R) node[above] {$w_1$}
		-- cycle (120:\R) node[above left] {$u_2$}
		-- cycle (150:\R) node[above left] {$v_1$}
		-- cycle (180:\R) node[left] {$u_1$}
		-- cycle (210:\R) node[below left] {$w_2$}
		-- cycle (240:\R) node[below left] {$v_4$}
		-- cycle (270:\R) node[below] {$u_4$}
		-- cycle (300:\R) node[below right] {$w_4$}
		-- cycle (330:\R) node[below right] {$v_2$};
		\draw[thick,red] (150:\R) -- (90:\R) -- (180:\R)--(60:\R) -- (210:\R) -- (30:\R)--(240:\R)--(0:\R) -- (270:\R) -- (330:\R);
		\draw [thick,blue] (0:\R)--(30:\R) --(60:\R)-- (90:\R)--(120:\R)--(150:\R)--(180:\R)--(210:\R) --(240:\R)--(270:\R)--(300:\R)--(330:\R)--(0:\R);
		\end{tikzpicture}
		\caption{Discs $A$, $B$ and $C$ (from left to right)}\label{fig: patches A,B,C}
	\end{figure}
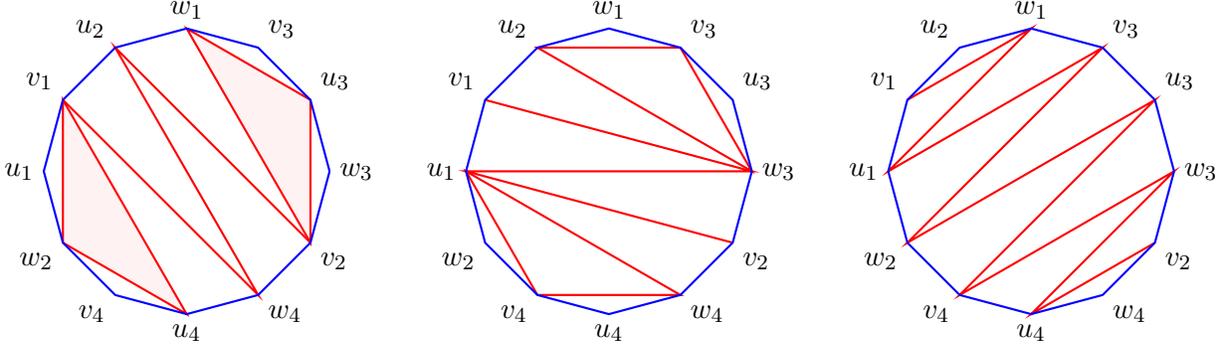
	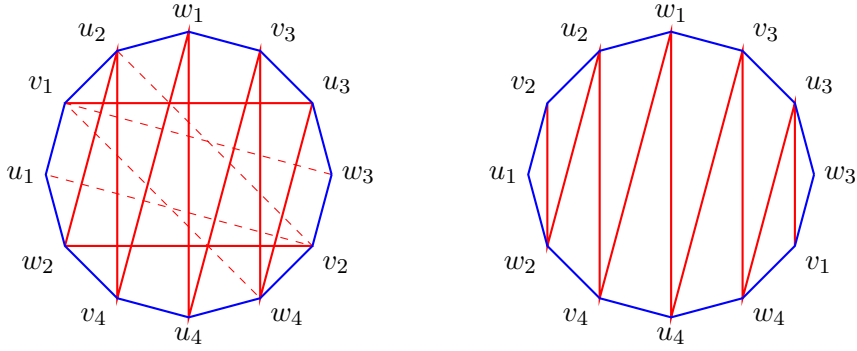
\begin{figure}[h]
		\centering
		\begin{tikzpicture}
		\newdimen\R
		\R=1.9cm
		\draw (0:\R)
		-- cycle (360:\R) node[right] {$w_3$}
		-- cycle (30:\R) node[above right] {$u_3$}
		-- cycle (60:\R) node[above right] {$v_3$}
		-- cycle (90:\R) node[above] {$w_1$}
		-- cycle (120:\R) node[above left] {$u_2$}
		-- cycle (150:\R) node[above left] {$v_1$}
		-- cycle (180:\R) node[left] {$u_1$}
		-- cycle (210:\R) node[below left] {$w_2$}
		-- cycle (240:\R) node[below left] {$v_4$}
		-- cycle (270:\R) node[below] {$u_4$}
		-- cycle (300:\R) node[below right] {$w_4$}
		-- cycle (330:\R) node[below right] {$v_2$};
		\draw[thick,red] (210:\R) -- (120:\R) -- (240:\R)--(90:\R) -- (270:\R) -- (60:\R)--(300:\R)-- (30:\R);
		\draw[thick, red] (150:\R)--(30:\R);
		\draw[thick, red] (210:\R)--(330:\R);
		\draw[dashed,red] (0:\R)--(150:\R)--(300:\R);
		\draw[dashed,red] (120:\R)--(330:\R)--(180:\R);
		\draw [thick,blue] (0:\R)--(30:\R) --(60:\R)-- (90:\R)--(120:\R)--(150:\R)--(180:\R)--(210:\R) --(240:\R)--(270:\R)--(300:\R)--(330:\R)--(0:\R);
		\end{tikzpicture}
		\hspace{10mm}
		\begin{tikzpicture}
		\newdimen\R
		\R=1.9cm
		\draw (0:\R)
		-- cycle (360:\R) node[right] {$w_3$}
		-- cycle (30:\R) node[above right] {$u_3$}
		-- cycle (60:\R) node[above right] {$v_3$}
		-- cycle (90:\R) node[above] {$w_1$}
		-- cycle (120:\R) node[above left] {$u_2$}
		-- cycle (150:\R) node[above left] {$v_2$}
		-- cycle (180:\R) node[left] {$u_1$}
		-- cycle (210:\R) node[below left] {$w_2$}
		-- cycle (240:\R) node[below left] {$v_4$}
		-- cycle (270:\R) node[below] {$u_4$}
		-- cycle (300:\R) node[below right] {$w_4$}
		-- cycle (330:\R) node[below right] {$v_1$};
		\draw[thick,red] (150:\R)--(210:\R) -- (120:\R) -- (240:\R)--(90:\R) -- (270:\R) -- (60:\R)--(300:\R)-- (30:\R)--(330:\R);
		\draw [thick,blue] (0:\R)--(30:\R) --(60:\R)-- (90:\R)--(120:\R)--(150:\R)--(180:\R)--(210:\R) --(240:\R)--(270:\R)--(300:\R)--(330:\R)--(0:\R);
		\end{tikzpicture}
		\caption{Left: disc $D'$. Right: disc $D$ obtained after rearranging the boundary of $D'$.}\label{fig: patch D} 
	\end{figure}
	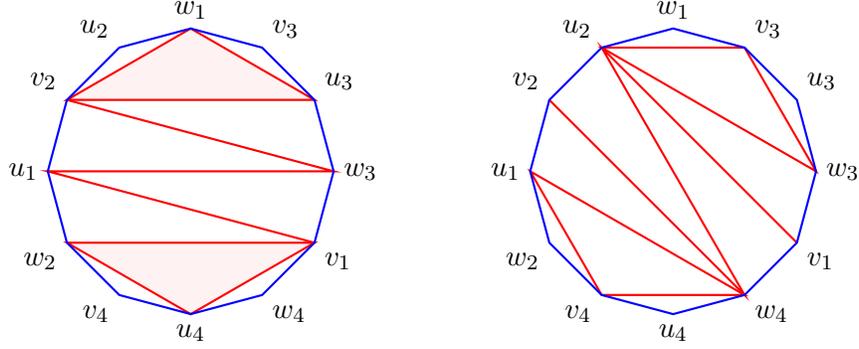
\begin{figure}[h]
		\centering
		\begin{tikzpicture}
		\newdimen\R
		\R=1.9cm
		\draw (0:\R)
		-- cycle (360:\R) node[right] {$w_3$}
		-- cycle (30:\R) node[above right] {$u_3$}
		-- cycle (60:\R) node[above right] {$v_3$}
		-- cycle (90:\R) node[above] {$w_1$}
		-- cycle (120:\R) node[above left] {$u_2$}
		-- cycle (150:\R) node[above left] {$v_2$}
		-- cycle (180:\R) node[left] {$u_1$}
		-- cycle (210:\R) node[below left] {$w_2$}
		-- cycle (240:\R) node[below left] {$v_4$}
		-- cycle (270:\R) node[below] {$u_4$}
		-- cycle (300:\R) node[below right] {$w_4$}
		-- cycle (330:\R) node[below right] {$v_1$};
		\draw [thick, red, fill=red!5] (30:\R)--(90:\R)--(150:\R)--cycle;
		\draw [thick, red,fill=red!5] (210:\R)--(270:\R)--(330:\R)--cycle;
		\draw [thick, red] (150:\R)--(0:\R)--(180:\R)--(330:\R);
		\draw [thick,blue] (0:\R)--(30:\R) --(60:\R)-- (90:\R)--(120:\R)--(150:\R)--(180:\R)--(210:\R) --(240:\R)--(270:\R)--(300:\R)--(330:\R)--(0:\R);
		\end{tikzpicture}
		\hspace{10mm}
		\begin{tikzpicture}
		\newdimen\R
		\R=1.9cm
		\draw (0:\R)
		-- cycle (360:\R) node[right] {$w_3$}
		-- cycle (30:\R) node[above right] {$u_3$}
		-- cycle (60:\R) node[above right] {$v_3$}
		-- cycle (90:\R) node[above] {$w_1$}
		-- cycle (120:\R) node[above left] {$u_2$}
		-- cycle (150:\R) node[above left] {$v_2$}
		-- cycle (180:\R) node[left] {$u_1$}
		-- cycle (210:\R) node[below left] {$w_2$}
		-- cycle (240:\R) node[below left] {$v_4$}
		-- cycle (270:\R) node[below] {$u_4$}
		-- cycle (300:\R) node[below right] {$w_4$}
		-- cycle (330:\R) node[below right] {$v_1$};
		\draw [thick, red] (0:\R)--(60:\R)--(120:\R)--cycle;
		\draw [thick, red] (180:\R)--(240:\R)--(300:\R)--cycle;
		\draw [thick, red] (330:\R)--(120:\R)--(300:\R)--(150:\R);
		\draw [thick,blue] (0:\R)--(30:\R) --(60:\R)-- (90:\R)--(120:\R)--(150:\R)--(180:\R)--(210:\R) --(240:\R)--(270:\R)--(300:\R)--(330:\R)--(0:\R);
		\end{tikzpicture}
		\caption{Left: disc $A'$. Right: disc $B'$. Notice that $\partial A'=\partial B'=\partial D$.}
		\label{figure: A' and B'}
	\end{figure}
	Assume that $V_1=\{u_1, u_2, u_3, u_4\}$, $V_2=\{v_1,v_2,v_3,v_4\}$, $V_3=\{w_1,w_2,w_3,w_4\}$ and $V_4=\{z_1,z_2,z_3,z_4\}$ are the four color sets of a balanced 3-sphere $\Gamma$. We let $\lk_\Gamma z_1=A\cup_{\partial A \sim \partial C} C$ and $\lk_\Gamma z_3=B\cup_{\partial B \sim \partial C} C$, where $A$, $B$ and $C$ are triangulated 2-balls sharing the same boundary as shown in Figure \ref{fig: patches A,B,C}. All possible edges that do not appear in $A$, $B$ and $C$ are shown in Figure \ref{fig: patch D} as solid red edges in disc $D'$. Notice that the dashed edges in $D'$ are edges in discs $A$ and $B$, so we may rearrange the boundary of $D$ by switching the positions of vertices $v_1$ and $v_2$, and then replacing the edges containing $v_1$ or $v_2$ in $\partial D'$ by the dashed edges. In this way, we obtain a triangulation of a 12-gon $D$ as shown in Figure \ref{fig: patch D}. Furthermore, $\partial D\subseteq A\cup B$, and $\partial D$ divides the sphere $=A\cup_{\partial A \sim \partial B} B$ into two discs $A'$ and $B'$ as shown in Figure \ref{figure: A' and B'}. 	
	
	We let $\lk_\Gamma z_2=A'\cup_{\partial A' \sim \partial D} D$ and $\lk_\Gamma z_4=B'\cup_{\partial B' \sim \partial D} D$. Since both $\st_\Gamma z_1\cap \st_\Gamma z_3=C$ and $\st_\Gamma z_2\cap (\st_\Gamma z_1\cup \st_\Gamma z_3)=A'$ are simplicial 2-balls, it follows that $\Sigma=\cup_{i=1}^{3} \st_\Gamma z_i$ is a simplicial 3-ball. Furthermore, the boundary of $\Sigma$ is exactly $\lk_\Gamma z_4$. Hence $\Gamma=\Sigma\cup\st_\Gamma z_4$ is indeed a balanced 2-neighborly 3-sphere. 
\end{construction}

\begin{remark} Here we provide some properties of $\Gamma$ in Construction \ref{Second Example}. 
	\begin{enumerate}
		\item $(A\cup B, C, D)$ is an ear decomposition of $\Gamma_{[3]}$.
		
		\item The automorphism group of $\Gamma$ has two generators \[(u_1u_3u_2u_4)(v_1z_2v_2z_1)(v_3z_4v_4z_3)(w_1w_4w_2w_3),\: (z_1v_1)(z_2v_2)(z_3v_3)(z_4v_4)(u_1w_1)(u_2w_2)(u_3w_3)(u_4w_4).\] (The second generator is given by switching vertices of color 1 and 3, and color 2 and 4, but with the same subscript.) Hence ${\rm{Aut}}(\Gamma)$ has 8 elements.
		
		\item The complex $\Gamma$ given in Construction \ref{Second Example} is shellable. For $\lk_\Gamma z_1=A\cup_{\partial A\sim \partial C}C$, there exist two shellings $c_1,\ldots,c_{10}, a_1,\ldots, a_{10}$ and $a'_1,\ldots,a'_{10}, c'_1,\ldots,c'_{10}$ such that for any $1\leq i\leq 10$, $c_i, c'_i$ are facets from $C$ and $a_i, a'_i$ are facets from $A$. Similarly, there exist two shellings $c_1,\ldots,c_{10}, b_1,\ldots, b_{10}$ and $b'_1,\ldots,b'_{10},c'_1,\ldots,c'_{10}$  for $\lk_\Gamma z_3=B\cup_{\partial B\sim \partial C}C$, where $b_i,b'_i$ are facets from $B$. Then
		\[a'_1*z_1,\ldots,a'_{10}*z_1, c'_1*z_1,\ldots,c'_{10}*{z_1}, c_1*z_3, \ldots, c_{10}*z_3,b_1*z_3,\ldots,b_{10}*z_3\]
		gives a shelling of $\st_\Gamma z_1\cup \st_\Gamma z_3$. We may extend this shelling into a complete shelling of $\Gamma$ by constructing two similar shellings of $\lk_\Gamma z_2$ and $\lk_\Gamma z_4$. However, we tried some computer tests and failed to prove either polytopality or non-polytopality. 
	\end{enumerate}
	
\end{remark}
\begin{remark}
	It is easy to see that if $\Delta_1$ is a balanced 2-neighborly $(d_1-1)$-sphere and $\Delta_2$ is a balanced 2-neighborly $(d_2-1)$-sphere, then $\Delta_1*\Delta_2$ is a balanced 2-neighborly $(d_1+d_2-1)$-sphere. Hence by taking joins, we find balanced 2-neighborly $(4k-1)$-spheres with $16k$ vertices for any $k\geq 1$.
\end{remark}
\begin{question}
	Let $d\geq 4$ and $m\geq 5$ be arbitrary integers. Is there a balanced 2-neighborly simplicial $(d-1)$-sphere all of whose color sets have the same size $m$? Is there a polytopal sphere with these properties?
\end{question}

	\section{Balanced $2$-neighborly $L(3,1)$ with $16$ vertices}
		In this section we present our first construction of a balanced 2-neighborly lens space $L(3,1)$ with 16 vertices. We denote it by $\Delta$. Each color set of $\Delta$ has four vertices.
		
		\begin{figure}[h]
			\centering
			\subfloat[$\lk_\Delta z_1$]{\includegraphics{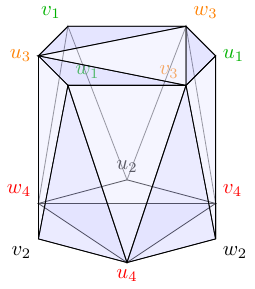}}
			\hspace{15mm}
			\subfloat[$\lk_\Delta z_2$]{\includegraphics{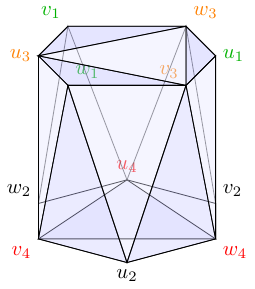}}
			\\
			\subfloat[$\lk_\Delta z_3$]{\includegraphics{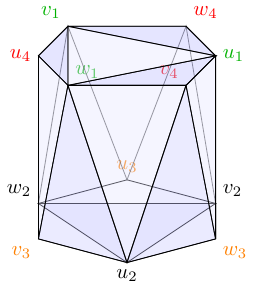}}
			\hspace{15mm}	
			\subfloat[$\lk_\Delta z_4$]{\includegraphics{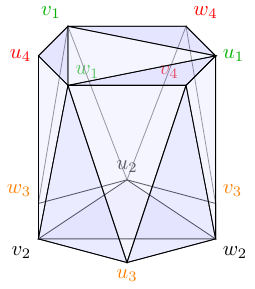}}
			\caption{Four vertex links of $\Delta$}	
			\label{Figure: four links}
		\end{figure}
		
		\begin{construction}\label{First Example}
			We denote the color sets of $\Delta$ by $V_1=\{u_1,u_2,u_3,u_4\}$, $V_2=\{v_1,v_2,v_3,v_4\}$, $V_3=\{w_1,w_2,w_3,w_4\}$ and $V_4=\{z_1,z_2,z_3,z_4\}$. 
			
			In Figure \ref{Figure: four links} we illustrate the construction of the vertex links $\lk_\Delta z_i$ for $i=1,\ldots,4$. All these links are realized as cylinders. Two links $\lk_\Delta z_1$ and $\lk_\Delta z_2$ share the same top and bottom, which are triangulated hexagons spanned by vertices $\{u_i,v_i,w_i:i=1,3\}$ and $\{u_i,v_i,w_i:i=2,4\}$, respectively. To construct $\lk_\Delta z_3$ from $\lk_\Delta z_1$, we switch the positions of vertices $u_3, v_3, w_3$ with vertices $u_4,v_4,w_4$ respectively and form a new cylinder. The new top and bottom hexagons contain the 2-faces $\{u_1,v_1,w_1\}$ and $\{u_2,v_2,w_2\}$. Similarly, we construct the link $\lk_\Delta z_4$ from $\lk_\Delta z_2$ by switching the positions of vertices $u_3, v_3, w_3$ with vertices $u_4,v_4,w_4$ and letting $\{u_1,v_1,w_1\}$ and $\{u_2,v_2,w_2\}$ be the 2-faces that appear in the triangulation of the top and bottom hexagons. It follows that $\lk_\Delta z_3$ and $\lk_\Delta z_4$ also share the same top and bottom.
			
			Now since $\Delta$ is balanced 2-neighborly, by our construction, it only remains to show that $\Delta$ triangulates the lens space $L(3,1)$. The geometric realizations of $\st_\Delta z_1$ and $\st_\Delta z_2$ are filled cylinders that share top and bottom. So their union $A:=\st_\Delta z_1\cup \st_\Delta z_2$ is a filled torus (that is, a genus-1 handlebody); so is the union $B:=\st_\Delta z_3\cup \st_\Delta z_4$. Note that these two handlebodies have identical boundary complexes, thus they provide a Heegaard splitting of a lens space.
			
			To identify which lens space $\Delta$ triangulates, we need to determine the homeomorphism $\phi: \partial A\to \partial B$. Consider two generators $\gamma, \delta$ of $\pi_1(A\cap B)= \pi_1(\partial A)$, where $\gamma$ is the 6-cycle $(u_3,v_1,w_3,u_1,v_3,w_1)$ and $\delta$ is the 4-cycle $(u_1,w_2,u_4,w_3)$. In particular, $\delta$ is also a generator of $\pi_1(A)$. From the construction we see that $\phi(\gamma)$ is a loop running around the equator of $\partial B$ thrice and the meridian of $\partial B$ once. Also $\phi(\delta)$ runs around the equator of $\partial B$ twice and the meridian of $\partial B$ once. Hence it is indeed the lens space $L(3,1)$.
			
			\begin{remark}\label{rm: property of construction 1}
			Our construction $\Delta$ has the following properties:
			\begin{enumerate}
				\item All vertex links are combinatorially equivalent. 
				\item From Figure 5 we see $\lk_\Delta z_i\cap\lk_\Delta z_j$ has two connected components when $\{i,j\}=\{1,2\}$ or $\{3,4\}$ (they are the top and bottom hexagons as shown in Figure 2); and it has three connected components when $i\in\{1,2\}$ and $j\in\{3,4\}$ (each component is the union of two facets along the side of the cylinders). In general, the intersection of two vertex links, where the vertices are of the same color, always has at least two connected components.
					\item There are three group actions on the vertices of $\Delta$:
					\begin{enumerate}
						\item Fix the subscript and rotate the corresponding vertices of color 1, 2 and 3 respectively. The generator is given by $(u_1v_1w_1)(u_2v_2w_2)(u_3v_3w_3)$.
						\item Rotate vertices of the same color. The generator is \[(u_1u_3u_2u_4)(v_1v_3v_2v_4)(w_1w_3w_2w_4)(z_1z_3z_2z_4).\]
						\item Exchange $\lk_\Delta z_1$ and $\lk_\Delta z_2$, $\lk_\Delta z_3$ and $\lk_\Delta z_4$, by exchanging $v_i$ and $w_i$ (or $u_i$ and $w_i$, $u_i$ and $v_i$) for all $i\in [4]$. The generators are $(z_1z_2)(z_3z_4)(v_1w_1)(v_2w_2)(v_3w_3)(v_4w_4)$, $(z_1z_2)(z_3z_4)(u_1w_1)(u_2w_2)(u_3w_3)(u_4w_4)$ and $(z_1z_2)(z_3z_4)(u_1v_1)(u_2v_2)(u_3v_3)(u_4v_4)$.
					\end{enumerate}
					The automorphism group of $\Delta$ is of size 96.  
				\end{enumerate}
			\end{remark}
		\end{construction}
		
		\begin{proposition}
			The complex $\Delta$ is a balanced vertex minimal triangulation of $L(3,1)$.
		\end{proposition}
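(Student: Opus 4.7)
The plan is to prove vertex minimality by showing that any balanced triangulation $\Delta'$ of $L(3,1)$ must have at least $4$ vertices in each of its $4$ color classes, giving $f_0(\Delta')\geq 16$ and matching the construction. The key enabling fact is that $L(3,1)$ is a $\Q$-homology $3$-sphere (since $H_*(L(3,1);\Q)=H_*(S^3;\Q)$), so the Section~2 machinery applies with $\field=\Q$; in particular, any such $\Delta'$ is a balanced $\Q$-homology $3$-sphere, and Lemma~\ref{lm: intersection of links is sphere} is available over $\Q$.

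Suppose for contradiction some color class, say $V_4$, satisfies $|V_4|\leq 3$. When $|V_4|=1$, $\Delta'$ is the cone on its unique vertex, hence a $3$-ball, contradicting closedness of $L(3,1)$. When $|V_4|=2$ with $V_4=\{v,w\}$, every $2$-face of $\lk v$ lies in exactly two $3$-faces of $\Delta'$, whose unique color-$4$ vertices must be $v$ and $w$; this forces $\lk v=\lk w$, so $\Delta'$ is the suspension of $\lk v\cong S^2$, namely $S^3\neq L(3,1)$.

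The main case is $|V_4|=3$, say $V_4=\{v_1,v_2,v_3\}$. I would write $\Sigma_{ij}=\lk v_i\cap\lk v_j$ and $\Gamma=\bigcap_k\lk v_k$, and apply Lemma~\ref{lm: intersection of links is sphere} over $\Q$ to conclude that each $\Sigma_{ij}$ is a $\Q$-homology $2$-ball and $\Gamma$ a $\Q$-homology $1$-sphere. Because the paper's definition of homology ball/sphere includes all link conditions, in these low dimensions $\Sigma_{ij}$ and $\Gamma$ are combinatorial manifolds (with boundary, in the case of $\Sigma_{ij}$); using orientability inherited from $\lk v_i\cong S^2$ and the classification of compact surfaces, this upgrades to $\Sigma_{ij}\cong B^2$ and $\Gamma\cong S^1$ topologically. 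A direct count shows $\lk v_k=\Sigma_{ik}\cup\Sigma_{jk}$ for $\{i,j,k\}=\{1,2,3\}$: each $2$-face of $\lk v_k$ lies in two $3$-faces of $\Delta'$, one through $v_k$ and the other through some $v_\ell\in V_4\setminus\{v_k\}$. It then follows that $\st v_1\cup\st v_2$ is two $3$-balls glued along the $2$-disk $\Sigma_{12}$, itself a $3$-ball, with boundary $\Sigma_{13}\cup\Sigma_{23}=\lk v_3\cong S^2$. Hence $\Delta'=(\st v_1\cup\st v_2)\cup\st v_3$ is the union of two $3$-balls along a common $S^2$, forcing $\Delta'\cong S^3$ and contradicting $\Delta'\cong L(3,1)$.

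Combining the three cases gives $|V_j|\geq 4$ for every $j$, so $f_0(\Delta')\geq 16$, and the construction attains this bound. I expect the main obstacle to be the $|V_4|=3$ case, specifically the step from the $\Q$-homology conclusions of Lemma~\ref{lm: intersection of links is sphere} to the topological identifications $\Sigma_{ij}\cong B^2$ and $\Gamma\cong S^1$: this requires invoking the paper's link-condition definition of homology manifold to obtain combinatorial manifold structure, and then applying the classification of surfaces. Once these identifications are in place, recognizing $\Delta'$ as the gluing of two $3$-balls along their boundary $2$-sphere, and hence as $S^3$, is a routine Heegaard-style argument.
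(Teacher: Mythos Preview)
Your argument is correct but follows a genuinely different route from the paper. For the cases $|V_4|\le 2$ the paper simply cites Proposition~6.1 of \cite{KN}, while you dispose of them directly. For the main case $|V_4|=3$, the paper gives a two--line integral Mayer--Vietoris argument: from the covering $(\st v_1\cup\st v_2,\ \st v_3)$ of $\Delta'$ one reads off $H_1(\st v_1\cup\st v_2)\cong H_1(L(3,1))=\Z/3\Z$, while a second Mayer--Vietoris on $(\st v_1,\st v_2)$ identifies $H_1(\st v_1\cup\st v_2)$ with the free abelian group $\tilde H_0(\st v_1\cap\st v_2)$, a contradiction. Your route instead exploits that $L(3,1)$ is a $\Q$-homology sphere, applies Lemma~\ref{lm: intersection of links is sphere} over $\Q$, upgrades the resulting low-dimensional $\Q$-homology balls and spheres to genuine disks and a circle (valid, since the link conditions force $\Gamma$ to be a $1$-manifold and force vertex links in $\Sigma_{ij}$ to be paths or circles), and then recognises $\Delta'$ as two $3$-balls glued along their common boundary $S^2$, hence $S^3$. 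The paper's approach is shorter and purely homological; yours is more geometric and actually proves the slightly stronger fact that any balanced $\Q$-homology $3$-sphere with a $3$-element color class is homeomorphic to $S^3$, not merely that its $H_1$ is torsion-free. Both arguments extend to all lens spaces $L(p,q)$ with $p>1$, as the paper remarks after its proof.
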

		
		\begin{proof}
			By Proposition 6.1 in \cite{KN}, each color set of $\Delta$ is of size at least 3. If there are exactly three vertices $v_1, v_2, v_3$ of color 1 in $\Delta$, apply the Mayer-Vietoris sequence on the triple $(\st_\Delta v_1\cup\st_\Delta v_2, \st_\Delta v_3, \Delta)$ and we obtain that 
			\[0=H_1(\lk_\Delta v_3)\to H_1(\st_\Delta v_1\cup\st_\Delta v_2)\oplus H_1(\st_\Delta v_3) \to H_1(\Delta)\to H_0(\lk_\Delta v_3)=0.\]
			Hence $H_1(\st_\Delta v_1\cup\st_\Delta v_2)\cong H_1(\Delta)=\mathbb{Z}/3\mathbb{Z}$. However, this is impossible since $H_1(\st_\Delta v_1\cup\st_\Delta v_2)\cong H_0(\st_\Delta v_1\cap\st_\Delta v_2)$, which cannot be $\mathbb{Z}/3\mathbb{Z}$.
		\end{proof}
		
		The same argument as above also shows that the balanced triangulation of any lens space $L(p,q)$ with $p>1$ must have at least 16 vertices.

	\section{Balanced spheres and ear decomposition}
	In this section our goal is to construct a balanced 3-sphere whose rank-selected subcomplexes do not have ear decompositions. The motivation is from the balanced 2-neighborly construction of $L(3,1)$ in Section 5. Indeed, we want to construct a balanced 3-dimensional complex $\Delta$ so that 1) each vertex link is a 2-sphere; 2) for a fixed color set $V_4=\{v_1,\cdots, v_k\}$, the intersection of any two vertex links $\lk_\Delta v_i\cap \lk_\Delta v_j$ always has at least two connected components (as the property listed in Remark \ref{rm: property of construction 1}); and 3) $\cup_{i=1}^{4}\st_\Delta v_i$ is 3-ball, which together with the condition 1) guarantees that $\Delta$ is a 3-sphere.
	
	In the following we take $k=5$ and give such a construction. Figure \ref{fig1:links} illustrates the links $\lk_\Delta v_1,\cdots,\lk_\Delta v_4$. Every label represents the color of the vertex. Also each connected component of $\lk_\Delta v_1\cap \lk_\Delta v_2$ is colored in green, $\lk_\Delta v_i\cap \lk_\Delta v_3$ is colored in blue for $i=1,2$, and $\lk_\Delta v_j\cap \lk_\Delta v_4$ is colored in pink for $j=1,2,3$. Immediately we check that all these intersections of vertex links have 2 or 3 connected components.
	\begin{figure}[h]
			\centering
			\subfloat[$\lk_\Delta v_1$ and $\lk_\Delta v_2$]{\includegraphics[scale=0.6]{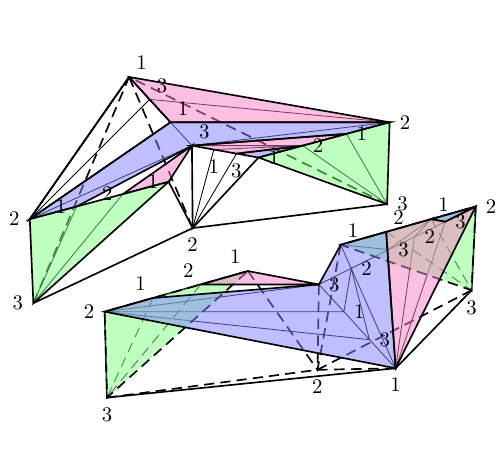}\label{link1,2}}
			\hspace{5mm}
			\subfloat[$\lk_\Delta v_3$]{\includegraphics[scale=0.6]{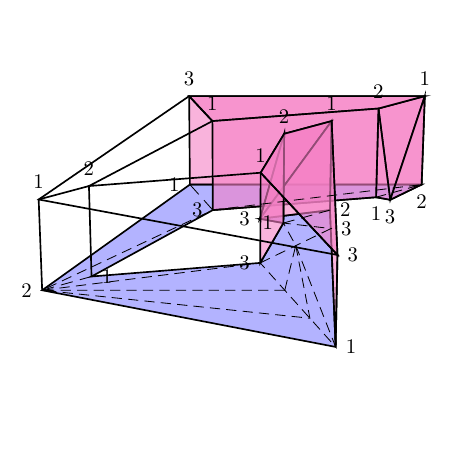}\label{link3}}
			\subfloat[$\lk_\Delta v_4$]{\includegraphics[scale=0.6]{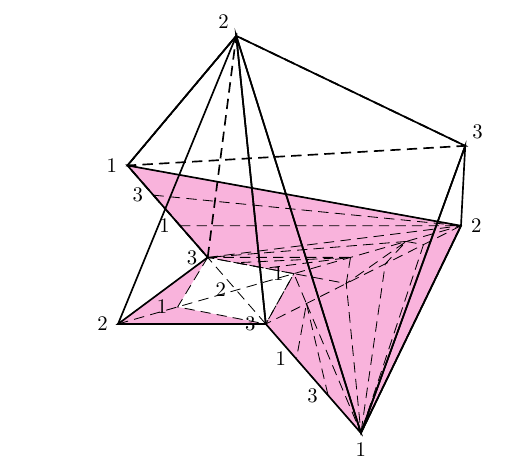}\label{link4}}
			\caption{Four vertex links as triangulated 2-spheres. For simplicity's sake, we omit some diagonal edges in the quadrilaterals in (b), and some labels of vertices in (c).}\label{fig1:links}	
		\end{figure}
		
		\begin{figure}[h]
			\centering
			\subfloat[$\lk_\Delta v_1\cup\lk_\Delta v_2$]{\includegraphics[scale=0.6]{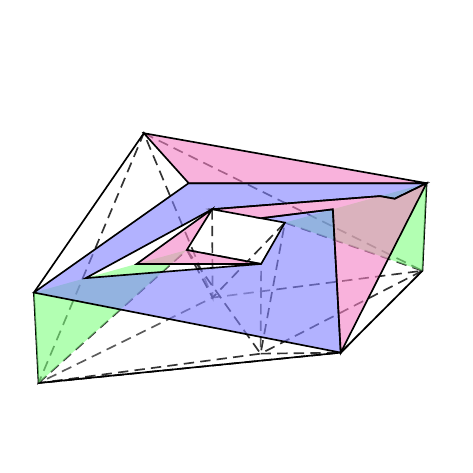}\label{link12}}
			\hspace{3mm}
			\subfloat[$\cup_{i=1}^{3}\lk_\Delta v_i$]{\includegraphics[scale=0.6]{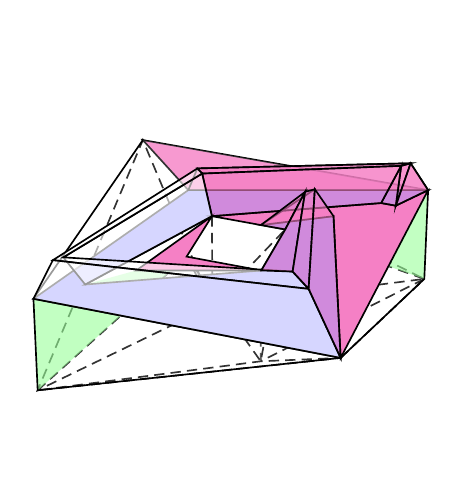}\label{link123}}
			\subfloat[$\cup_{i=1}^{4}\lk_\Delta v_4$]{\includegraphics[scale=0.6]{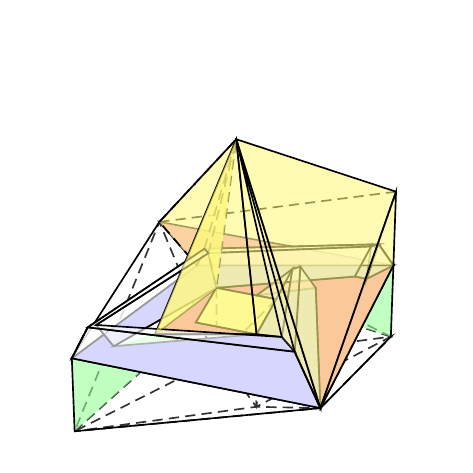}\label{link1234}}
			\caption[short]{how the links are glued together.}\label{fig2:union of links}	
		\end{figure}
		
		Figure \ref{fig2:union of links} shows how $\Delta\backslash V_4$ is formed from these links. First we glue $\lk_\Delta v_1$ and $\lk_\Delta v_2$ along two green triangles. The resulting complex $\lk_\Delta v_1\cup \lk_\Delta v_2$ is shown in Figure \ref{link12}. Then we place $\lk_\Delta v_3$ on top of $\lk_\Delta v_1\cup\lk_\Delta v_2$. As we see from Figure \ref{link123}, the boundary complex of $\cup_{i=1}^{3}\st_\Delta v_i$ is a triangulated torus. Finally, we place $\lk_\Delta v_4$ on top of $\cup_{i=1}^{3}\lk_\Delta v_i$ so that $\st_\Delta v_4$ ``covers the 1-dimensional hole" in $\cup_{i=1}^{3}\st_\Delta v_i$, see Figure \ref{link1234}. We denote the subspace of $\mathbb{R}^3$ enclosed by $\lk_\Delta v_i$ as $S_i$ for $1\leq i\leq 4$, and let $S_5:=\cup_{i\leq 4} S_i$. From our construction it follows that the boundary complex of $S_5$ is a 2-sphere; we let it be $\lk_\Delta v_5$. Indeed $\Delta$ is a 3-sphere since $\Delta$ is the union of two 3-balls $S_5$ and $\st_\Delta v_5$ glued along their common boundary $\lk_\Delta v_5$.
		
		Since each $\lk_\Delta v_i\cap \lk_\Delta v_j$ has at least two connected components for $1\leq i\neq j\leq 4$, the Mayer-Vietoris sequence implies that $S_i\cup S_j$ is not contractible for all $1\leq i\neq j\leq 4$. A similar inspection of $\lk_\Delta v_i\cup\lk_\Delta v_j\cup\lk_\Delta v_k$ also implies that the boundary complexes of $S_i\cup S_j\cup S_k$'s cannot be triangulated 2-spheres for distinct $1\leq i,j,k\leq 4$.		
		
		\begin{proposition}
			Not all rank-selected subcomplexes of balanced simplicial spheres have ear decompositions.
		\end{proposition}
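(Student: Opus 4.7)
The plan is to take $\Gamma:=\Delta_{\{1,2,3\}}=\bigcup_{i=1}^{5}\lk_\Delta v_i$, the rank-selected subcomplex of the balanced 3-sphere $\Delta$ just built, and show that $\Gamma$ admits no ear decomposition. Since $\Gamma$ is plainly not a single 2-sphere, any hypothetical ear decomposition $\Delta_1,\ldots,\Delta_m$ has $m\ge 2$, and the goal is to obstruct the very first additional ear $\Delta_2$.

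First I would classify all 2-sphere subcomplexes of $\Gamma$. Every 2-face of $\Gamma$ is rainbow, hence contained in exactly two facets of $\Delta$, distinguished by their color-4 vertex. So a 2-sphere subcomplex $\Sigma\subseteq\Gamma$ embeds in $\Delta\cong S^3$ and, by the Schoenflies theorem, separates $S^3$ into two 3-balls, each of the form $\bigcup_{v_i\in I}\bar{\st}_\Delta v_i$ for some partition $V_4=I\sqcup J$. The construction's Mayer--Vietoris observations rule out $|I|\in\{2,3\}$ for $I\subseteq\{v_1,v_2,v_3,v_4\}$, and complementation---using that $\bar{\st}_\Delta v_5$ is the closure of the complement of $\bigcup_{i\le 4}S_i$---reduces every remaining partition containing $v_5$ to one of these already-excluded cases. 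Thus the only 2-sphere subcomplexes of $\Gamma$ are the five vertex links $\lk_\Delta v_1,\ldots,\lk_\Delta v_5$. The same Mayer--Vietoris computation, applied to $\bar{\st}_\Delta v_i\cup\bar{\st}_\Delta v_j$, also gives that $\lk_\Delta v_i\cap\lk_\Delta v_j$ has at least two connected components for every $1\le i\ne j\le 5$; the cases involving $v_5$ come from the ``triple'' observation after complementation.

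Given these facts, the contradiction comes quickly. By the classification, $\Delta_1=\lk_\Delta v_{i_0}$ for some $i_0$, say $i_0=1$. The boundary $\partial\Delta_2$ is a simple closed curve in the 2-sphere $\lk_\Delta v_1$, which by the Jordan curve theorem splits $\lk_\Delta v_1$ into two closed disks $D_1,D_2$. Each $D_k\cup\Delta_2$ is two disks glued along their common boundary, hence a 2-sphere subcomplex of $\Gamma$; by the classification, $D_k\cup\Delta_2=\lk_\Delta v_{a_k}$ for some $a_k$. A short check shows $D_1=\lk_\Delta v_1\cap\lk_\Delta v_{a_1}$: the inclusion ``$\subseteq$'' is immediate, and conversely any face $\sigma\in\lk_\Delta v_1\cap\lk_\Delta v_{a_1}\subseteq D_1\cup\Delta_2$ that lay in $\Delta_2$ would also lie in $\Delta_2\cap\Delta_1=\partial\Delta_2\subseteq D_1$. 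But $D_1$ is connected, while $\lk_\Delta v_1\cap\lk_\Delta v_{a_1}$ has at least two connected components---contradiction.

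I expect the main obstacle to be the classification step, in particular the Schoenflies argument and the uniform two-component statement for intersections involving $v_5$: the cases with $I\subseteq\{1,2,3,4\}$ are already covered by the construction, but the $v_5$ cases require the complementation trick combined with the ``triple'' observation. Once the classification and the two-component statement are in place, the Jordan-curve contradiction in the last paragraph is short and requires no further bookkeeping.
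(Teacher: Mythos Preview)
Your approach is correct and gives a genuinely different route from the paper's, but there is one imprecision worth flagging.

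\medskip
\noindent\textbf{Comparison.} The paper does not classify the $2$-sphere subcomplexes of $\Gamma=\Delta_{[3]}$. Instead it first uses $\beta_2(\Gamma)=4$ to force any ear decomposition to have exactly four pieces, then looks at the single $2$-sphere $\Gamma_1\cup\Gamma_2-\intr(\Gamma_1\cap\Gamma_2)$ and applies the Jordan--Schoenflies theorem in the ambient $\mathbb{R}^3$ of the construction: the bounded region it encloses must be a union of the $S_i$, and a short shifting argument (replacing $\Gamma_1\cup\Gamma_2$ by $\Gamma_1\cup\Gamma_2\cup\Gamma_3$ if the region is a single $S_i$) forces that union to be an $S_i\cup S_j$ or $S_i\cup S_j\cup S_k$ with $i,j,k\le 4$, whose boundary is known not to be a sphere. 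Your argument trades this shifting step for an upfront classification of all $2$-sphere subcomplexes of $\Gamma$ (they are exactly the five $\lk_\Delta v_i$), and then kills the very first ear via the Jordan curve theorem on $\Delta_1$. Your version is more structural, does not need the value of $k$, and isolates the obstruction at the level of pairwise link intersections; the paper's version is shorter but leans on the specific embedding in $\mathbb{R}^3$ and the counting $k=4$.

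\medskip
\noindent\textbf{The imprecision.} Your uniform claim that $\lk_\Delta v_i\cap\lk_\Delta v_j$ has at least two connected components for \emph{all} $i\neq j$ is stronger than what the construction actually provides for the pairs involving $v_5$. The ``complementation plus triple observation'' you invoke yields only that $\partial(\bar{\st}_\Delta v_5\cup \bar{\st}_\Delta v_\ell)=\partial(S_i\cup S_j\cup S_k)$ is not a $2$-sphere; via Mayer--Vietoris this rules out $\lk_\Delta v_5\cap\lk_\Delta v_\ell$ being a \emph{disk}, but not, a priori, its being a connected annulus. Fortunately, ``not a disk'' is exactly what your final step needs: you produce a disk $D_1$ and identify it with $\lk_\Delta v_{i_0}\cap\lk_\Delta v_{a_1}$, so the contradiction is with ``not a disk'', not with ``disconnected''. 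Alternatively, when $i_0=5$ you can avoid the $v_5$ intersection altogether by noting that $\lk_\Delta v_{a_1}\cap\lk_\Delta v_{a_2}=\Delta_2$ is a disk with $a_1,a_2\in\{1,2,3,4\}$, which directly contradicts the paper's pairwise disconnectedness for those indices. Either fix is immediate; just state the weaker claim you actually use.
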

		\begin{proof}
			Consider the complex $\Delta$ constructed above. We denote the union of interior faces of a complex $\tau$ by $\intr\tau$. Suppose $\Delta\backslash  V_4$ has an ear decomposition $(\Gamma_1,\Gamma_2,\cdots, \Gamma_k)$. Since $|V_4|=5$ and $\beta_{2}(\Delta\backslash V_4)=4$, $k$ must be 4. Notice first that $\cup_{i\leq 4} \lk_\Delta v_i$ divides $\mathbb{R}^3$ into five subspaces, namely, $S_1,\cdots, S_4$ and the complement of $S_5$, each having $\lk_\Delta v_i$ as the boundary complex for $1\leq i\leq 5$ respectively. The complex $\Gamma_1$ is the union of 2-balls $B_1, B_2$ with $\partial B_1=\partial B_2=\Gamma_1\cap \Gamma_2$. By the Jordan theorem, $B_1\cup \Gamma_2$ is a triangulated 2-sphere that separates $\mathbb{R}^3$ into two connected components. Hence the bounded component must be either $S_i\cup S_j$ or $S_i\cup S_j\cup S_k$ for some $1\leq i,j,k\leq 4$. (We may assume that it is not $S_i$, since otherwise we may consider the 2-sphere $\cup_{i\leq 3}\Gamma_i-\cup_{1\leq i\neq j\leq 3}\intr(\Gamma_i\cap\Gamma_j)$ instead of $\Gamma_1\cup\Gamma_2-\intr(\Gamma_1\cap\Gamma_2)$, where the subset enclosed by this sphere in $\mathbb{R}^3$ cannot be $S_i$ anymore.) This contradicts the fact that the boundaries of $S_i\cup S_j$ or $S_i\cup S_j\cup S_k$ are not 2-spheres.
		\end{proof}
		
		\begin{remark}
			 One can think of all the figures illustrated above as projections of a subcomplex of $\Delta -\st_\Delta v_5$ onto $\mathbb{R}^3$. However, we do not know whether the complex provided in this section can be realized as the boundary of a 4-polytope.
		\end{remark}
		\section*{Acknowledgements}
		The author was partially supported by a graduate fellowship from NSF grant DMS-1361423. I thank Moritz Firsching for pointing out the automorphism groups of the constructions in Section 3 and 4 and running some computational tests to decide whether the constructions are polytopal. Many thanks to Lorenzo Venturello and the anonymous referees for pointing out mistakes in an earlier version and contributing to a few remarks, improvement of the proofs in this paper.
		\bibliographystyle{amsplain}
		
\end{document}